\def\NAT@def@citea{\def\@citea{\NAT@separator}}% Suppress spaces between citations using natbib.sty
\theoremstyle{plain}% Theorem-like structures provided by amsthm.sty
\newtheorem{theorem}{Theorem}[section]
\newtheorem{lemma}[theorem]{Lemma}
\newtheorem{proposition}[theorem]{Proposition}
\theoremstyle{definition}
\newtheorem{definition}[theorem]{Definition}
\newtheorem{example}[theorem]{Example}
\theoremstyle{remark}
\newtheorem{remark}{Remark}
\newtheorem{asumption}[theorem]{Assumption}
\begin{document}

%\articletype{ARTICLE TEMPLATE}% Specify the article type or omit as appropriate

\title{Lagrange Multipliers, Duality, and Sensitivity in Set-Valued Convex Programming via Pointed Closed Convex Processes}

\author{
\name{Fernando Garc\'ia-Casta\~no\textsuperscript{a}\thanks{CONTACT Fernando Garc\'ia-Casta\~no  Email: Fernando.gc@ua.es} and M.A.  Melguizo Padial\textsuperscript{b}}
\affil{Department of Applied Mathematics, University of Alicante, Alicante, Spain; \textsuperscript{a}Orcid: 0000-0002-8352-8235;\textsuperscript{b}Orcid: 0000-0003-0303-791X}
}

\maketitle

\begin{abstract}
We present a new kind of Lagrangian duality theory for set-valued convex optimization problems whose objective and constraint maps are defined between preordered normed spaces.  The theory is accomplished by introducing a new set-valued Lagrange multiplier theorem and a dual program with variables that are pointed closed convex processes.  The pointed nature assumed for the processes is essential for the derivation of the main results presented in this research.  We also develop a strong duality theorem that guarantees the existence of dual solutions,  which are closely related to the sensitivity of the primal program.   It allows extending the common methods used in the study of scalar programs to the set-valued vector case.
\end{abstract}

\begin{keywords}
Lagrange multiplier;set-valued convex optimization; process;duality; sensitivity
\end{keywords}

\begin{amscode}
 90C29, 90C25, 90C31, 90C48
\end{amscode}

\maketitle

\section{Introduction} 
Set-valued optimization is an expanding branch in applied mathematics that has attracted a great deal of attention in the last decades \cite{aubin1990set,Jahn2004,Zalinescu2015,Mordukhovich2018,Pardalos2010}. This topic tackles optimization problems where the objective and/or the constraint maps are set-valued ones acting between abstract spaces.  Set-valued optimization problems have been analysed according to different concepts of solution. Such optimal solutions have usually been defined by means of vector, set,  or lattice approaches. 

In the vector approach, the first to be developed, the concept of solution is based on the standard notion of Pareto minimal point and its many variants.  This approach has been widely developed in the convex case and the corresponding literature is extensive. We refer the reader to \cite{Grad2015,Jahn2004,Zalinescu2015,Luc1988} and the references therein.

However, solution concepts based on vector approaches are sometimes improper. In order to avoid this drawback, it is sometimes convenient to work with order relation for sets.  The solution concept based on set approaches is based on a set order relation. This is obtained by extending the original preordered image vector space to its power set. This approach was introduced by Kuroiwa, see \cite{Kuroiwa1996,Kuroiwa1997,Kuroiwa1998,Kuroiwa2001,Kuroiwa2003}. Important contributions were made later, see for example \cite{HerRoMa2007_1,HerRoMa2007_2,HerRoMa2007_3,HerRoMa2007_4,HerRoMa2009_1}.

Finally, in lattice approaches, the concept of solution is based on a lattice structure on the power set of the image space. The corresponding infimum and supremum are sets related to the sets of weak optimal points. This view is useful for applications of set-valued approaches in the theory of vector optimization, especially in duality theory. We refer the reader to \cite{Lhone2011} for a comprehensive discussion and \cite{Hamel2014} for some subsequent extensions.

In this paper, we deal with the solution concept based on a vector approach. From this perspective, we establish a new Lagrange multiplier theorem. Then, we introduce a new kind of Lagrangian duality scheme for set-valued optimization programs whose objective and constraint maps are convex and defined between preordered normed spaces.   We prove that the sensitivity of the primal program is closely related to the set of dual solutions.  Although such solutions are usually continuous linear operators, the dual variables considered in this paper are pointed closed convex processes.  The use of processes as dual variables  is not entirely new in set-valued analysis. For instance,   we find dual variables that are closed convex processes in \cite{Hamel2009,Hamel2011,Hamel2014,Lhone2011}. The processes in this work are also pointed.  This property seems to improve the adaptability of the variables to the structure of convex set-valued vector problems.  On the other hand, the arguments in this manuscript are direct and broadly geometric.   Roughly speaking, we extend the methods used in the study of scalar programs in \cite[Chapter 8]{Luenberger1969} to the study of set-valued vector programs.  In this way, our work includes the scalar case as a particular one. Finally,  some results in our earlier paper \cite{GC-Melguizo2019} have been enhanced by those obtained in this work. 

The paper is organized as follows. In Section  \ref{Preliminaries and Notations} we state some preliminary terminology. In Section \ref{The Set-valued Lagrange Multipliers} we introduce the parametric constrained set-optimization problem $(P(z))$ to be analysed in the paper. Then we state a new Lagrange multiplier theorem for $(P(0))$. Such a result, Theorem \ref{ThLagMult}, is stated in terms of nondominated points instead of minimal points. Hence, we do not force the primal program to reach its optimal points. In Section \ref{Duality} we define a dual program. Theorem \ref{Tdualidad} guarantees the existence of a dual solution even if we do not assume the existence of a minimal solution in the primal program.  Section \ref{Sensitivity Analysis} is devoted to the study of the sensitivity of the primal program. The formulas for sensitivity in Theorems~\ref{TeorConting} and \ref{thsensi} are expressed in terms of the Lagrange process introduced in \cite{GC-Melguizo2019}. In Remark~\ref{Remark_sensibilidad} we note that the sensitivity of the program is closely related to the solutions of the dual program. Finally, in Section \ref{Conclusions}, we present conclusions that summarize this work and we pose some open problems for further research.

\section{Preliminaries and Notation}\label{Preliminaries and Notations}
Let $Y$ and $Z$ be normed spaces with topological duals $Y^*$ and $Z^*$. We denote by $\mbox{cl}(A) $, $\mbox{int}(A) $, $\mbox{bd}(A)$, $\mbox{co}(A)$, and $\mbox{cone}(A)$ the closure, interior, boundary,  convex hull, and cone hull of a set $A\subset Y$, respectively. Sometimes the parentheses will be omitted. We will consider the sum of two subsets in $Y$ in the usual way adopting the convention $A+\emptyset=\emptyset+A=\emptyset$, for every subset $A \subset Y$. A non-empty subset $K\subset Y$ is called a cone if $\alpha K \subset K$, for all $\alpha \in \mathbb{R}_+$, with $\mathbb{R}_+$ the set of non-negative real numbers.  A cone $K\subset Y$ is said to be pointed (resp. solid, proper) if $K \cap (-K)=\{ 0\}$ (resp. int$(K)\not = \emptyset$, $\{0\}\not = K\not =Y$).  Given a set-valued map  $F: Z \rightrightarrows Y$ we may identify, in a natural way, $F$ with its graph which is the set defined by Graph$(F):=\{(z,y)\in Z\times Y \colon y\in F(z)\}$. The domain of $F$ is defined by Dom$(F):=\{z\in Z: F(z) \neq \emptyset\}$ and the image of $F$ is defined by Im$(F):=\cup_{z \in \mbox{Dom}(F)} F(z)$.  The image by $F$ of the set $A\subset \mbox{Dom}(F)$ is $F(A):=\cup_{a \in A}F(a)$. %We denote by $F^-$ the set-valued map $F^-: Z \rightrightarrows Y$ defined by  $F^-(z):=\{y \in Y\colon y \in F(-z)\}$, for every $z \in Z$.  
A set-valued map $F$ is said to be a process if Graph$(F)$ is a cone. A process $F$ is said to be  convex (resp. closed, pointed) if Graph$(F)$  is convex (resp. closed, pointed). We will denote by $P(Z,Y)$ the set of all closed convex processes $F$ from $Z$ into $Y$ such that Dom$(F)=Z$.  Let $Y_+\subset Y$ be a convex cone and pick arbitrary $y_1$, $y_2\in Y$, we write $y_1\leq y_2$ if and only if $y_2-y_1\in Y_+$. Then ``$\leq$''  defines a reflexive and transitive  relation (a preorder) on $Y$ and the cone $Y_+$ is called the ordering cone on $Y$.  A set-valued map $F: C \rightrightarrows Y$defined on a non-empty convex subset $C \subset Z$, is said to be $Y_+$-convex if its epigraph, Epi$(F):=\left\lbrace (z,y)\in Z \times Y : y \in F(z)+Y_+  \right\rbrace $, is convex.  We say that a point $y_0\in Y$ is nondominated by the set $A\subset Y$,  if $A  \cap (y_0-Y_+) \subset y_0+Y_+$. We say that a point $y_0 \in Y$ is a minimal point of a set $A\subset Y$, written $y_0\in$ Min$(A)$, if $y_0\in A$ and $y_0$ is nondominated by $A$.If $Y_+$ is pointed, then $y_0\in$ Min$(A)$ if and only if $A  \cap (y_0-Y_+) =\{y_0\}$. Let us note that in the real line with the usual order minimal points become minima, and any nondominated point belonging to the closure of a set becomes its infimum. Analogously, we say that  $y_0 \in A$ is a maximal point of a set $A\subset Y$, written $y_0\in$ Max$(A)$, if  $A  \cap (y_0+Y_+) \subset y_0-Y_+$. If $Y_+$ is pointed, then $y_0\in$ Max$(A)$ if and only if $A  \cap (y_0+Y_+) =\{y_0\}$.  If the ordering cone $Y_+$ is solid, then we can introduce the following relation. For arbitrary $y_1$, $y_2\in Y$ we write  $y_1 < y_2$ if and only if $y_2-y_1\in$ $\mbox{int}(Y_+)$.   We say that  a point $y_0\in A$ is a weak minimal (resp. weak maximal) point of a set $A\subset Y$, written $y_0\in$ WMin$(A)$ (resp.  $y_0\in$ WMax$(A)$),  if $A  \cap (y_0- \mbox{int}(Y_+)) = \emptyset$ (resp. $A  \cap (y_0+ \mbox{int}(Y_+)) = \emptyset$ ).

\section{Problem Formulation and Set-Valued Lagrange Multipliers}\label{The Set-valued Lagrange Multipliers}
We begin this section by establishing the set-valued vector constrained problem $(P(z))$ which will be analysed throughout this paper. We next introduce the notion of nondominated point of $(P(z))$. Then we introduce the notion of Lagrange multiplier of $(P(0))$ associated with a nondominated point, and we show that such Lagrange multipliers always exist. We associate to each  nondominated point $y_0$ of $(P(0))$, a non-empty set   $\Gamma_{y_0}$ of pointed closed convex processes. The main result of the section, Theorem~\ref{ThLagMult},  assures that every element in $\Gamma_{y_0}$ is a Lagrange multiplier of $(P(0))$.

Here and subsequently we consider $X$, $Y$, and $Z$ normed spaces.  We assume that $Y$ and $Z$ are preordered normed spaces and that each corresponding ordering cone, $Y_+$ and $Z_+$, is proper and solid.  Let $\Omega \subset X$ be a convex set.  Let us be given the set-valued maps $F:\Omega \rightrightarrows Y$ and $G:\Omega \rightrightarrows Z$   where $F$ is  $Y_+$-convex and $G$ is $Z_+$-convex.  These maps determine the following parametric set-optimization problem:
\[
\text{  Min \,} F(x) \text{ \  such that \ } x\in \Omega, \text{ \ } G(x) \cap \left( z-Z_+ \right) \neq \emptyset. \text{ \ \ \ \ \ \ } (P(z))
\] 
We adopt the following notations: for every $z \in Z$ we have  $S(z):=\{x \in \Omega\colon  G(x) \cap \left( z-Z_+ \right) \neq \emptyset \}$,  $V (z):= F(S(z))$, $(V + Y_+) (z):= V (z)  + Y_+ $,  and
$M(z):= \text{Min}(V(z)$). With these notations, we obtain in addition four set-valued maps:
$ S : Z\rightrightarrows X$,  $V: Z\rightrightarrows Y$,  $V + Y_+ : Z\rightrightarrows Y$, and $M: Z\rightrightarrows Y$. In the terminology of scalar optimization, the map $M$ is called the marginal function.  In our setting, we will refer to $M$ as the set-valued marginal map. An important question in connection with parametric optimization is the study of the derivative  of such a map. Section \ref{Sensitivity Analysis} is devoted to such a topic (sensitivity).

\begin{definition} \label{InfumablePoint}
Let $z \in Z$ and consider the corresponding program $(P(z))$.
\item[(i)] A pair $(x_z,y_z)\in S(z)\times V(z)$  is called a minimizer  of  $(P(z))$ if $y_z \in F(x_z) \cap  M(z)$. Then $y_z$ is said to be a minimal point of $(P(z))$. 
\item[(ii)] A point $y_z\in Y$ is called a nondominated point of  $(P(z))$, written $y_z\in ND(P(z))$,  if  $y_z\in \mbox{cl}(V(z))$ and  $y_z$ is nondominated by the set $V(z)$.
\end{definition}

\begin{remark}
The notion of nondominated point is less restrictive than that of minimal point because a nondominated point is not required to be achieved at a feasible solution. This fact is significant because the optima of the problem can be chosen among the corresponding nondominated points.
\end{remark}
The constraint in $(P(z))$ can be rewritten as, $z \in G(x)+ Z_+$, which leads us to the next notion.
\begin{definition} \label{LMs}
Let $y_0\in ND(P(0))$. A closed convex process $\Delta: Z\rightrightarrows Y$ is said to be a Lagrange multiplier of $(P(0))$ at $y_0$, if $y_0$ is a nondominated point of the program
\[
\text{  Min \,} F(x)+\Delta(G(x)+Z_+) \text{ \  such that \ } x\in \Omega.\text{ \ \ \ \ \ \ } (P[\Delta])
\]
In other words, if $y_0\in \mbox{cl}(\cup_{x \in \Omega}(F(x)+\Delta(G(x)+Z_+)))$ and $y_0$ is nondominated by the set $\cup_{x \in \Omega}(F(x)+\Delta(G(x)+Z_+))$.
\end{definition}

\begin{definition}\label{Definicion_S_Y}
Let $y_0\in ND(P(0))$.  We define
\begin{equation*}\label{defi_S_Y}
\begin{array}{ll}
\mathcal{S}_{Y_+}(y_0):= & \left\lbrace  (z^*, y^*)\in Z^* \times Y^* : \langle z^*, z'\rangle +\langle y^*, y'\rangle  \leq  \langle y^*, y_0 \rangle \leq \langle z^*, z\rangle +\langle y^*, y\rangle ,\right.  \\
             & \left. \forall (z', y') \in (-Z_+)\times (y_0-Y_+), \forall (z,y)\in \mbox{Graph}(V +Y_+) \right\rbrace,
\end{array}
\end{equation*}
and $\mathcal{S}'_{Y_+}(y_0):=\mathcal{S}_{Y_+}(y_0)\setminus\left\lbrace (0,0)\right\rbrace.$ 
\end{definition}
\begin{remark}
The set $\mathcal{S}_{Y_+}(y_0)$ is a convex cone and contains at least a non-zero element. That is a consequence of Eidelheit's separation theorem (see e.g. \cite[Theorem 1.1.3, p. 133]{Luenberger1969}), which can be applied because $\mbox{int}((-Z_+)\times (y_0-Y_+)) \neq \emptyset$, the set $ \mbox{Graph}(V+Y_+)$ is convex, and $(0,y_0) \in \mbox{bd}(\mbox{Graph}(V+Y_+))$.  To check the convexity of $\mbox{Graph}(V+Y_+)$  we consider the set-valued maps
$(G, F):\Omega \rightrightarrows Z \times Y$ and $((G, F)+(Z_+ \times Y_+)):\Omega \rightrightarrows Z \times Y$ defined respectively  by $(G,F)(x):=(G(x),F(x))$ and $((G, F)+(Z_+ \times Y_+))(x):= (G(x)+Z_+,F(x)+Y_+)$, $\forall x \in \Omega$.  Then, such a convexity follows directly from the $(Z_+ \times Y_+)$-convexity of the set-valued map $(G, F)$ and the equality $\mbox{Graph}(V+Y_+)= \mbox{Im}((G, F)+(Z_+ \times Y_+) )$. The choice $y_0\in ND(P(0))$ implies $(0,y_0) \in \mbox{bd}(\mbox{Graph}(V+Y_+))$.  Indeed, if $(0,y_0) \in \mbox{int}(\mbox{Graph}(V+Y_+))$, then  there would exist  $y_+ \in Y_+ \setminus \{0\} $ such that $(0, y_0-y_+) \in \mbox{Graph}(V+Y_+)$. As a consequence, we would have $x\in \Omega $ and $y \in F(x)  $ such that $G(x)\cap (-Z_+)\neq \emptyset$ and $y\leq y_0-y_+ < y_0$. That contradicts $y_0\in ND(P(0))$.
\end{remark}
From now on and throughout the whole paper, we assume the usual Slater constraint qualification. It will appear in the statements of the main results in the paper  guaranteeing that $y^* \not = 0 $  for every $(z^*, y^*) \in \mathcal{S}'_{Y_+}(y_0)$.

\begin{asumption}[Slater constraint qualification]\label{HipoSlater}
There exists $x_1 \in \Omega$ for which $G(x_1) \cap \left(  - \text{int}(Z_+) \right)\neq \emptyset$.
\end{asumption}

In what follows, we will make use of the set-valued maps introduced by Hamel in \cite{Hamel2009}.  For every $(z^*,y^*) \in Z^* \times Y^* \setminus \{(0,0)\} $, the set-valued map $S_{(z^*,y^*)}: Z \rightrightarrows Y$ is defined by $S_{(z^*,y^*)}(z):=\left\lbrace y\in Y :   \langle z^*,z\rangle + \langle y^*,y \rangle \leq 0 \right\rbrace, \, \forall z \in Z$.  

\begin{definition}\label{DefGammay0}
Let $y_0\in ND(P(0))$ and  $(z^*,y^*) \in  \mathcal{S}'_{Y_+}(y_0)$.  We define 
\item[(i)] The set of processes associated to $(z^*,y^*)$ by \[\Lambda_{(z^*,y^*)}:=\{\Delta \in P(Z,Y)\colon \text{Graph}(\Delta)\setminus\left\lbrace (0,0)\right\rbrace \subseteq \mbox{int} (\text{Graph}(-S_{(z^*,y^*)}))\}.\]
\item[(ii)] The set of processes associated to $y_0$ by
\begin{equation}\label{Formula_defi_Gamma_y0}
\Gamma_{y_0}:= \bigcup_{(z^*,y^*) \in  \mathcal{S}'_{Y_+}(y_0)}{\Lambda_{(z^*,y^*)}} \subset P(Z,Y).
\end{equation}
\end{definition}
Lemma \ref{Lemma_usamos_Hipo_Slater} below states that assumption \ref{HipoSlater} implies $\Gamma_{y_0}\not = \emptyset$. Then we are guaranteed that such a set is non-empty  for every nondominated point of $(P(0))$.

\begin{lemma}\label{Lemma_Delta_pointed}
Let $y_0\in ND(P(0))$ and $\Delta \in \Gamma_{y_0}$. Then $\Delta$ is a closed, convex, and pointed process.
\end{lemma}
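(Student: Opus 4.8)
The statement bundles three claims, but two of them are essentially definitional. Since $\Delta \in \Gamma_{y_0}$, by the definition in (\ref{Formula_defi_Gamma_y0}) there is a pair $(z^*,y^*)\in \mathcal{S}'_{Y_+}(y_0)$ with $\Delta \in \Lambda_{(z^*,y^*)}$, and by Definition~\ref{DefGammay0}(i) this forces $\Delta \in P(Z,Y)$. By the definition of $P(Z,Y)$ in Section~\ref{Preliminaries and Notations}, every element of $P(Z,Y)$ is a closed convex process with full domain $Z$. Hence $\Delta$ is automatically closed and convex, and the whole burden of the proof is to verify that $\mbox{Graph}(\Delta)$ is pointed, that is, $\mbox{Graph}(\Delta)\cap(-\mbox{Graph}(\Delta))=\{(0,0)\}$.

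The plan is to first make the inclusion defining $\Lambda_{(z^*,y^*)}$ explicit. Unwinding the definition of $S_{(z^*,y^*)}$, the graph of $-S_{(z^*,y^*)}$ is the closed half-space $\{(z,y)\in Z\times Y : \langle z^*,z\rangle \leq \langle y^*,y\rangle\}$. Because $(z^*,y^*)\neq (0,0)$, as $(z^*,y^*)\in\mathcal{S}'_{Y_+}(y_0)$, the map $(z,y)\mapsto \langle z^*,z\rangle-\langle y^*,y\rangle$ is a nonzero continuous linear functional on $Z\times Y$, so the interior of this closed half-space is the corresponding open one, $\mbox{int}(\mbox{Graph}(-S_{(z^*,y^*)}))=\{(z,y): \langle z^*,z\rangle < \langle y^*,y\rangle\}$. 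Thus the defining inclusion $\mbox{Graph}(\Delta)\setminus\{(0,0)\}\subseteq \mbox{int}(\mbox{Graph}(-S_{(z^*,y^*)}))$ becomes the clean analytic condition that $\langle z^*,z\rangle < \langle y^*,y\rangle$ holds for every $(z,y)\in\mbox{Graph}(\Delta)$ with $(z,y)\neq(0,0)$.

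With this reformulation the pointedness argument is immediate. Suppose, toward a contradiction, that some $(z,y)\neq (0,0)$ lies in $\mbox{Graph}(\Delta)\cap(-\mbox{Graph}(\Delta))$. Then both $(z,y)$ and $(-z,-y)$ belong to $\mbox{Graph}(\Delta)$ and are nonzero, so the inequality above applied to each gives $\langle z^*,z\rangle < \langle y^*,y\rangle$ together with $\langle y^*,y\rangle < \langle z^*,z\rangle$, which is impossible. Hence no such point exists and $\mbox{Graph}(\Delta)$ is pointed. I expect the only delicate point to be the interior computation of the second paragraph, namely justifying that the interior of the closed half-space is exactly the open half-space; this rests on the continuity and non-triviality of the functional, and hence precisely on $(z^*,y^*)\neq(0,0)$, which in turn is guaranteed by membership in $\mathcal{S}'_{Y_+}(y_0)$. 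Everything after that is a one-line antisymmetry argument.
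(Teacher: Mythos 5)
Your proof is correct and follows essentially the same route as the paper's: identify the pair $(z^*,y^*)\in\mathcal{S}'_{Y_+}(y_0)$ with $\Delta\in\Lambda_{(z^*,y^*)}$, rewrite $\mbox{int}(\mbox{Graph}(-S_{(z^*,y^*)}))$ as the open half-space $\{(z,y): -\langle z^*,z\rangle+\langle y^*,y\rangle>0\}$, and derive pointedness by the antisymmetry contradiction applied to $(z,y)$ and $(-z,-y)$. Your only addition is that you explicitly justify the interior computation (via the non-triviality of the functional, which the paper asserts without comment), which is a welcome but minor refinement.
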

\begin{proof}
Let us fix $\Delta \in \Gamma_{y_0}$. We will check that Graph($\Delta$) is a pointed cone. Indeed, for $\Delta \in \Gamma_{y_0}$ there exists $(z^*,y^*) \in  \mathcal{S}'_{Y_+}(y_0)$ such that $\text{Graph}(\Delta)\setminus\left\lbrace (0,0)\right\rbrace \subseteq \mbox{int} (\text{Graph}(-S_{(z^*,y^*)}))=\{(z,y)\in Z \times Y\colon -z^*(z)+y^*(y)>0\}$. Now, assume that $(z,y) \in \text{Graph}(\Delta)\cap (-\text{Graph}(\Delta))$. If $(z,y)\not = (0,0)$, then 
$-z^*(z)+y^*(y)>0$ and $-z^*(-z)+y^*(-y)=z^*(z)-y^*(y)>0$, a contradiction. Thus $(z,y) = (0,0)$.
\end{proof}

\begin{lemma}\label{Lemma_cono}
Let $X$ be a normed space, $B \subset X$ the closed unit ball, $T \in X^*$, and $x_0 \in X$. If $T(x_0)>0$, then there exits $\epsilon>0$ such that ${\mbox{cl}(\mbox{cone}}(x_0+\delta B))\setminus\{0\}\subset \{x \in X\colon T(x)>0\}$, for every $0<\delta\leq \epsilon$.
\end{lemma}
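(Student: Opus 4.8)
The plan is to exploit that $T(x_0)>0$ forces $T\neq 0$, so $\|T\|>0$, and to produce a \emph{uniform} strictly positive lower bound for $T$ on the whole ball $x_0+\delta B$ once $\delta$ is small. Concretely, I would set $c:=T(x_0)>0$ and choose $\epsilon:=c/(2\|T\|)$. First I would check the lower bound: for $0<\delta\le\epsilon$ and any $x\in x_0+\delta B$, write $x=x_0+\delta b$ with $\|b\|\le 1$; then $T(x)=c+\delta T(b)\ge c-\delta\|T\|\ge c-\epsilon\|T\|=c/2=:m>0$. Thus $T\ge m$ on $x_0+\delta B$, with $m$ not depending on the particular $\delta\in(0,\epsilon]$.

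The passage to the raw cone hull is then immediate from positive homogeneity of $T$: every nonzero element of $\mbox{cone}(x_0+\delta B)$ has the form $\alpha x$ with $\alpha>0$ and $x\in x_0+\delta B$ (the value $\alpha=0$ only produces $0$, which is excluded), whence $T(\alpha x)=\alpha\,T(x)\ge\alpha m>0$. Therefore $\mbox{cone}(x_0+\delta B)\setminus\{0\}\subset\{x\in X\colon T(x)>0\}$ without difficulty.

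The step I expect to be the main obstacle is extending this inclusion to the \emph{closure}, since in an infinite-dimensional $X$ the bounded set $x_0+\delta B$ is not compact and one cannot simply extract a convergent subsequence of the base points. I would instead argue entirely through the functional. Take $y\in\mbox{cl}(\mbox{cone}(x_0+\delta B))$ with $y\neq 0$, and a sequence $y_n=\alpha_n x_n\to y$ with $\alpha_n\ge 0$ and $x_n\in x_0+\delta B$. Continuity of $T$ gives $T(y_n)\to T(y)$, and $T(y_n)=\alpha_n T(x_n)\ge\alpha_n m\ge 0$, so $T(y)\ge 0$. To upgrade this to a strict inequality I would argue by contradiction: if $T(y)=0$, then $0\le\alpha_n\le T(y_n)/m\to 0$, so $\alpha_n\to 0$; since $\|x_n\|\le\|x_0\|+\delta$ is uniformly bounded, $\|y_n\|=\alpha_n\|x_n\|\to 0$ and hence $y=0$, contradicting $y\neq 0$. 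Consequently $T(y)>0$. The key to overcoming the lack of compactness is precisely the combination of the uniform boundedness of $x_0+\delta B$ with the strictly positive uniform bound $m$, which together prevent a nonzero limit from landing on the hyperplane $\{T=0\}$, completing the proof.
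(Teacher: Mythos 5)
Your proof is correct, but it takes a genuinely different route from the paper's. The paper chooses $\epsilon$ only qualitatively (by continuity of $T$, so that $T>0$ on $x_0+\epsilon B$ and $0\notin x_0+\epsilon B$) and then establishes the stronger structural fact that $\mbox{cone}(x_0+\epsilon B)$ is \emph{already closed}, so that the closure adds nothing: given $0\neq x=\lim_n\lambda_n(x_0+u_n)$, the condition $0\notin x_0+\epsilon B$ forces $(\lambda_n)_n$ to be bounded, a convergent subsequence $\lambda_n\to\lambda\geq 0$ is extracted, and the cases $\lambda>0$ (which uses closedness of $\epsilon B$ to land the limit back in the cone) and $\lambda=0$ (which uses boundedness of $B$ to force $x=0$) are handled separately. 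You instead argue quantitatively through the functional: the explicit choice $\epsilon=T(x_0)/(2\|T\|)$ gives the uniform lower bound $T\geq T(x_0)/2=:m$ on $x_0+\delta B$ for every $0<\delta\leq\epsilon$, and for a nonzero closure point $y$ the estimate $\alpha_n\leq T(y_n)/m$ converts the contradiction hypothesis $T(y)=0$ directly into $\alpha_n\to 0$, whence $y=0$ by the uniform bound $\|x_n\|\leq\|x_0\|+\delta$. This avoids both the subsequence extraction and the case split, and it uses only the boundedness of the ball, not its closedness. What the paper's version buys in exchange is the sharper conclusion that the cone hull itself is closed (a cone generated by a closed bounded set avoiding the origin is closed), which is precisely the structural point that Proposition~\ref{Prop_no_elc} later contrasts with the failure for weak neighbourhoods; your argument, by design, delivers only the inclusion stated in the lemma. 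Both proofs hinge in the end on the same mechanism --- norm-boundedness of $B$ together with separation from $\mbox{Ker}\,T$ prevents a nonzero limit arising from vanishing scalars --- so neither extends to locally convex topologies, consistent with the paper's remark preceding Proposition~\ref{Prop_no_elc}.
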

\begin{proof}
Let $\epsilon>0$ be such that $T(x_0+u)>0$, $\forall u \in \epsilon B$, and $0 \not \in x_0+\epsilon B$. Clearly we have that  ${\mbox{cone}}(x_0+\epsilon B)\setminus\{0\}\subset \{x \in X\colon T(x)>0\}$. We will check that ${\mbox{cl}(\mbox{cone}}(x_0+\epsilon B))={\mbox{cone}}(x_0+\epsilon B)$. Conversely, suppose that there exists $x \in \mbox{cl}(\mbox{cone}(x_0+ \epsilon B)) \setminus \mbox{cone}(x_0+\epsilon B)$. Then $0\not =x =\lim_n \lambda_n(x_0+u_n)$, for some sequence $(\lambda_n)_n$ of positive real numbers and some sequence $(u_n)_n \subset \epsilon B$. The condition $0 \not \in x_0+\epsilon B$ assures that the sequence $(\lambda_n)_n$ is bounded. Otherwise, we would have (maybe for some subsequence) that $0=\lim_n \frac{x}{\lambda_n}=\lim_n (x_0+u_n)$, a contradiction. So, it is not restrictive to assume that $\lim_n\lambda_n=\lambda\geq 0$. If $\lambda>0$, then $\frac{x}{\lambda}-x_0=\lim_n \frac{x}{\lambda_n}-x_0= \lim_n u_n \in \epsilon B$. As a consequence, there exists $u \in \epsilon B$ such that $x=\lambda(x_0+u) \in \mbox{cone}(x_0+\epsilon B)$, a contradiction. Therefore $\lim_n\lambda_n= 0$.  Since $B$ is bounded,  it follows that $x =\lim_n \lambda_n(x_0+u_n)=0$. Again a contradiction. The same reasoning applies to the case $0<\delta\leq \epsilon$.
\end{proof}

\begin{lemma}\label{Lemma_usamos_Hipo_Slater}
Let $y_0\in ND(P(0))$ and assume that there exists $x_1 \in \Omega$ for which $G(x_1) \cap \left(  - \text{int}(Z_+) \right)\neq \emptyset $.   Then $\Lambda_{(z^*,y^*)} \neq \emptyset, \forall (z^*,y^*) \in  \mathcal{S}'_{Y_+}(y_0)$.
\end{lemma}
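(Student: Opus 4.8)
The plan is to fix $(z^*,y^*)\in \mathcal{S}'_{Y_+}(y_0)$ and to exhibit a single process lying in $\Lambda_{(z^*,y^*)}$. Recall from the proof of Lemma~\ref{Lemma_Delta_pointed} that $\mbox{int}(\mbox{Graph}(-S_{(z^*,y^*)}))=\{(z,y)\in Z\times Y\colon -z^*(z)+y^*(y)>0\}$, so the task is to build a closed convex process $\Delta$ with $\mbox{Dom}(\Delta)=Z$ whose graph, apart from the origin, lies entirely in this open half-space. Since the domain must be all of $Z$ while the graph avoids the hyperplane $\{-z^*(z)+y^*(y)=0\}$ except at $0$, the construction cannot even get started unless $y^*\neq 0$: if $y^*=0$ the half-space is $\{z^*(z)<0\}$, which cannot contain the full graph of a process defined on all of $Z$. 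Establishing $y^*\neq 0$ is exactly where the hypothesis enters, and I expect it to be the crux.

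First I would note that the defining inequalities of $\mathcal{S}_{Y_+}(y_0)$ force $z^*$ to be nonnegative on $Z_+$: taking $y'=y_0$ in the left inequality and letting $z'$ range over $-Z_+$ gives $\langle z^*,z_+\rangle\geq 0$ for every $z_+\in Z_+$. Suppose now, towards a contradiction, that $y^*=0$; then $z^*\neq 0$ because $(z^*,y^*)\neq(0,0)$. Let $x_1\in\Omega$ and $z_1\in G(x_1)\cap(-\mbox{int}(Z_+))$ be as in the hypothesis and choose any $y_1\in F(x_1)$ (using that $F$ is nonempty-valued at the feasible point $x_1$), so that $(z_1,y_1)\in\mbox{Graph}(V+Y_+)$. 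With $y^*=0$, the right inequality in the definition of $\mathcal{S}_{Y_+}(y_0)$ gives $z^*(z_1)\geq 0$. But a nonzero functional that is nonnegative on $Z_+$ must be strictly positive on $\mbox{int}(Z_+)$: if $-z_1+\rho B\subset Z_+$ then $z^*(-z_1)\geq\rho\|z^*\|>0$, whence $z^*(z_1)<0$. This contradiction proves $y^*\neq 0$.

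With $y^*\neq 0$ in hand, I would choose $\bar y\in Y$ with $y^*(\bar y)>0$ and apply Lemma~\ref{Lemma_cono} in the normed space $X:=Z\times Y$ to the functional $T(z,y):=-z^*(z)+y^*(y)$ and the point $x_0:=(0,\bar y)$, for which $T(x_0)=y^*(\bar y)>0$. This yields $\epsilon>0$ such that $C:=\mbox{cl}(\mbox{cone}((0,\bar y)+\epsilon B))$ satisfies $C\setminus\{0\}\subset\{T>0\}=\mbox{int}(\mbox{Graph}(-S_{(z^*,y^*)}))$, and I would let $\Delta$ be the process with $\mbox{Graph}(\Delta)=C$. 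Being the closure of the cone generated by a convex set, $C$ is a closed convex cone, so $\Delta$ is a closed convex process. The decisive point for membership in $P(Z,Y)$ is that $\mbox{Dom}(\Delta)=Z$: because the first coordinate of $x_0$ is $0$, the projection of the ball $(0,\bar y)+\epsilon B$ onto $Z$ contains a ball about the origin of $Z$, and the cone generated by such a ball is all of $Z$; hence $\pi_Z(C)=Z$. Since $C\setminus\{0\}$ lies in the required open half-space by construction, $\Delta\in\Lambda_{(z^*,y^*)}$, so this set is non-empty. The only genuinely delicate steps are the verification that $y^*\neq 0$ and the observation that the generating ball must be centred above $0\in Z$ (rather than over an arbitrary point of the half-space) in order to secure the full domain.
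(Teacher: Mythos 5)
Your proposal is correct and follows essentially the same route as the paper: you use the Slater point together with the nonnegativity of $z^*$ on $Z_+$ to rule out $y^*=0$ (a step the paper compresses into the assertion that $y^*(y_+)>0$ on $\mbox{int}(Y_+)$), then apply Lemma~\ref{Lemma_cono} in $Z\times Y$ at a point of the form $(0,\bar y)$ with $T(0,\bar y)>0$, take $\mbox{Graph}(\Delta)=\mbox{cl}(\mbox{cone}((0,\bar y)+\epsilon B))$, and verify $\mbox{Dom}(\Delta)=Z$ via the projection of the ball onto $Z$ — exactly the paper's construction, with the only cosmetic difference that the paper centres the ball at $(0,y_+)$ for $y_+\in\mbox{int}(Y_+)$ rather than at an arbitrary $\bar y$ with $y^*(\bar y)>0$.
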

\begin{proof}
Let us fix $(z^*,y^*) \in  \mathcal{S}'_{Y_+}(y_0)$.
The  Slater constraint qualification and the equality $\mbox{int} (\mbox{Graph}(-S_{(z^*,y^*)}))=\left\lbrace (z,y)\in Z \times Y :   \langle -z^*,z\rangle + \langle y^*,y \rangle > 0 \right\rbrace$ imply that $(0,y_+) \in  \mbox{int} (\mbox{Graph}(-S_{(z^*,y^*)}))$, $\forall y_+ \in \mbox{int} (Y_+)$. Indeed, since the elements of $S_{Y_+}(y_0)$ take non-negative values on $Z_+\times Y_+$,  it follows that $y^*(y_+)\geq 0$, $\forall y_+\in Y_+$. Now, the Slater constraint qualification guarantees that $y^*(y_+)> 0$,  $\forall y_+\in \mbox{int}(Y_+)$.

Next, we define a process  $\Delta \in P(Z,Y)$ such that  Graph$(\Delta)\setminus\left\lbrace (0,0)\right\rbrace \subseteq \mbox{int} (\text{Graph}(-S_{(z^*,y^*)}))$. We fix $y_+ \in \mbox{int} (Y_+)$ and take $\epsilon>0$ from Lemma~\ref{Lemma_cono} corresponding to $T=(-z^*,y^*) \in (Z\times Y)^*$ and $(0,y_+)\in Z \times Y$. Let $B$ the closed unit ball of $Z\times Y$ and $0<\delta \leq \epsilon$ such that $(0,y_+) + \delta B \subseteq \mbox{int} (\mbox{Graph}(-S_{(z^*,y^*)}))$ and $(0,0)\not \in (0,y_+) + \delta B$. Define the closed cone $K:=\mbox{cl}(\mbox{cone}((0,y_+) + \delta B))\subset Z \times Y$  and the closed process $\Delta\in P(Z,Y)$ as Graph($\Delta$):=$K$. Let us check that $Dom(\Delta)=Z$. Fix any $z \in Z$. As $0$ belongs to interior of the canonical projection  of  $\delta B$ on $Z$, there exists some $n$ such that $z/n$ belongs to such a projection. Then, there exists $y \in Y$ such that $(z/n,y)\in \delta B$. Hence $(0,ny_+)+(z,ny)\in K$, which yields $n(y+y_+)\in \Delta(z)$. To finish, we note that adapting the proof of Lemma \ref{Lemma_cono} for $T=(-z^*,y^*) \in (Z\times Y)^*$, we have that $K\setminus \{(0,0)\}\subset \mbox{int} (\mbox{Graph}(-S_{(z^*,y^*)}))$. %Therefore $\Delta \in \Lambda_{(z^*,y^*)}$ and the proof is over. 
\end{proof}

According to the natural order in $\mathbb{R}$, the infimum of a lower bounded set of real numbers can be seen as a  nondominated point of its closure. So, our next result contains the classical scalar theorem \cite[Theorem~1, p.~217]{Luenberger1969} as a particular case.

\begin{theorem} \label{ThLagMult}
Let $y_0\in ND(P(0))$ and $\Delta \in \Gamma_{y_0}$. Assume that there exists $x_1 \in \Omega$ for which $G(x_1) \cap \left(  - \text{int}(Z_+) \right)\neq \emptyset$.  Then $\Delta$ is a Lagrange multiplier of $(P(0))$ at $y_0$, i.e.,  
$y_0$ is a nondominated point of the program
\begin{equation*}\label{DualPr}
\text{  Min \,} F(x)+\Delta(G(x)+Z_+) \text{ \  such that \ } x\in \Omega.  \text{ \ \ \ \ \ \ } (P[\Delta])
\end{equation*}
Furthermore, if $y_0$ is a minimal point of $(P(0))$, that is, if  $y_0 \in F(x_0)$ for a feasible solution $x_0$, then $y_0$ is a minimal point of  $(P[\Delta]) $  also achieved at $x_0$ and 
\begin{equation}\label{InterseccionTeo}
\Delta(G(x_0)+Z_+)\cap (-Y_+) \subseteq (-Y_+)\cap  Y_+.
\end{equation}

\end{theorem}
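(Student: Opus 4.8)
The plan is to fix a pair $(z^*,y^*)\in\mathcal{S}'_{Y_+}(y_0)$ witnessing $\Delta\in\Gamma_{y_0}$, i.e. with $\mbox{Graph}(\Delta)\setminus\{(0,0)\}\subseteq\mbox{int}(\mbox{Graph}(-S_{(z^*,y^*)}))$, and to read off three facts from $(z^*,y^*)\in\mathcal{S}_{Y_+}(y_0)$. Testing the left inequality of Definition~\ref{Definicion_S_Y} on $(0,y_0-y_+)$ with $y_+\in Y_+$ gives $y^*\in Y_+^*$ (and testing on $(z',y_0)$ gives $z^*\in Z_+^*$); the Slater qualification (Assumption~\ref{HipoSlater}) guarantees in addition that $y^*\neq 0$. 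The right inequality provides the separation $\langle y^*,y_0\rangle\leq\langle z^*,z\rangle+\langle y^*,y\rangle$ for every $(z,y)\in\mbox{Graph}(V+Y_+)$. Finally, exactly as computed in the proof of Lemma~\ref{Lemma_Delta_pointed}, the interior condition reads $\langle y^*,d\rangle>\langle z^*,u\rangle$ for every $(u,d)\in\mbox{Graph}(\Delta)\setminus\{(0,0)\}$. Throughout I write $W:=\cup_{x\in\Omega}(F(x)+\Delta(G(x)+Z_+))$, so that $\Delta$ being a Lagrange multiplier means $y_0\in\mbox{cl}(W)$ and $y_0$ is nondominated by $W$.

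First I would dispatch the closure requirement $y_0\in\mbox{cl}(W)$. Since $\Delta$ is a process, $\mbox{Graph}(\Delta)$ is a non-empty cone and hence $0\in\Delta(0)$; and for $x\in S(0)$ there is $g\in G(x)\cap(-Z_+)$, so $0=g+(-g)\in G(x)+Z_+$ and therefore $0\in\Delta(G(x)+Z_+)$. Consequently $F(x)\subseteq F(x)+\Delta(G(x)+Z_+)\subseteq W$ for every $x\in S(0)$, i.e. $V(0)\subseteq W$. As $y_0\in ND(P(0))$ gives $y_0\in\mbox{cl}(V(0))\subseteq\mbox{cl}(W)$, this part is finished.

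The heart of the argument is the nondominance of $y_0$ by $W$, namely $W\cap(y_0-Y_+)\subseteq y_0+Y_+$. I would take $w\in W\cap(y_0-Y_+)$ and a representation $w=f+d$ with $f\in F(x)$, $d\in\Delta(u)$, $u\in G(x)+Z_+$. Writing $u=g+k$ with $g\in G(x)$, $k\in Z_+$, one has $g=u-k\in G(x)\cap(u-Z_+)$, so $x\in S(u)$, whence $f\in V(u)$ and $(u,f)\in\mbox{Graph}(V+Y_+)$; the separation inequality then gives $\langle y^*,y_0\rangle\leq\langle z^*,u\rangle+\langle y^*,f\rangle$. If $(u,d)\neq(0,0)$, the interior inequality gives $\langle y^*,d\rangle>\langle z^*,u\rangle$, and combining these two estimates yields $\langle y^*,w\rangle=\langle y^*,f\rangle+\langle y^*,d\rangle>\langle y^*,y_0\rangle$; but $w\in y_0-Y_+$ and $y^*\in Y_+^*$ force $\langle y^*,w\rangle\leq\langle y^*,y_0\rangle$, a contradiction. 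Hence $(u,d)=(0,0)$, so $w=f$, and $0=u\in G(x)+Z_+$ forces $x\in S(0)$ and $w\in V(0)$; since $y_0$ is nondominated by $V(0)$ and $w\in V(0)\cap(y_0-Y_+)$, we conclude $w\in y_0+Y_+$. This establishes $y_0\in ND(P[\Delta])$, i.e. $\Delta$ is a Lagrange multiplier. The main obstacle, and the precise point where pointedness is decisive, is exactly this dichotomy: the \emph{strict} interior inequality annihilates every nontrivial contribution of $\Delta$ and collapses the analysis onto $V(0)$, where the hypothesis $y_0\in ND(P(0))$ can finally be invoked.

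It remains to treat the case where $y_0$ is minimal, say $y_0\in F(x_0)$ with $x_0\in S(0)$. As in the closure step $0\in\Delta(G(x_0)+Z_+)$, so $y_0=y_0+0\in F(x_0)+\Delta(G(x_0)+Z_+)\subseteq W$; combined with the nondominance just proved, $y_0$ belongs to $W$ and is nondominated by $W$, i.e. $y_0\in\mbox{Min}(W)$ achieved at $x_0$, which is the asserted minimality for $(P[\Delta])$. For the inclusion \eqref{InterseccionTeo}, I would take $d\in\Delta(G(x_0)+Z_+)\cap(-Y_+)$ and set $w:=y_0+d\in F(x_0)+\Delta(G(x_0)+Z_+)\subseteq W$; since $d\in-Y_+$ we have $w\in y_0-Y_+$, so nondominance gives $w\in y_0+Y_+$, that is $d=w-y_0\in Y_+$. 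Hence $d\in(-Y_+)\cap Y_+$, which is precisely \eqref{InterseccionTeo}.
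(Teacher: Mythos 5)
Your proof is correct and follows essentially the same route as the paper's: you fix a separating pair $(z^*,y^*)$ witnessing $\Delta\in\Lambda_{(z^*,y^*)}$, combine the support inequality for $\mbox{Graph}(V+Y_+)$ from $\mathcal{S}_{Y_+}(y_0)$ with the strict inequality $\langle y^*,d\rangle>\langle z^*,u\rangle$ on $\mbox{Graph}(\Delta)\setminus\{(0,0)\}$, and settle the closure claim, the minimality claim, and the inclusion (\ref{InterseccionTeo}) exactly as the paper does. If anything, your explicit dichotomy at $(u,d)=(0,0)$ --- falling back on the nondominance of $y_0$ by $V(0)$ when the process contributes trivially, rather than seeking a contradiction there --- is slightly more careful than the paper's argument, which applies the strict interior inequality without noting that degenerate case.
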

\begin{proof}
Let us prove that $\Delta \in \Gamma_{y_0}$ is a Lagrange multiplier of $(P(0))$ at $y_0$. We first check that $y_0 \in \mbox{cl}(\cup_{x \in \Omega}(F(x)+\Delta(G(x)+Z_+)))$. Let $B\subset Y$ be an arbitrary ball centred at the origin. Since $(y_0+B)\cap V(0) \not = \emptyset$, we can choose $u \in B$, $x_0 \in \Omega$ such that $G(x_0)\cap (-Z_+)\not = \emptyset$, and $y_1 \in F(x_0)$ such that $y_0+u =y_1$. Now, since  $G(x_0)\cap (-Z_+)\not = \emptyset$ is equivalent to $0 \in G(x_0)+Z_+$, we have that  $y_0+u =y_1+0 \in (y_0+B)\cap (F(x_0)+\Delta(G(x_0)+Z_+))$. Next, we will check that $y_0$ is nondominated by the set $\cup_{x \in \Omega}(F(x)+\Delta(G(x)+Z_+))$. On the contrary, suppose that there exist $\bar{x} \in \Omega$, $\bar{y}_1 \in F(\bar{x})$, and $\bar{y}_2 \in \Delta(\bar{z}_1 + \bar{z}_2)$ such that $\bar{z}_1 \in G(\bar{x})$, $ \bar{z}_2 \in Z_+$,  and $\bar{y}_1 + \bar{y}_2 \in y_0 - Y_+$. Fix $(z^*,y^*) \in  \mathcal{S}'_{Y_+}(y_0)$ such that $\Delta \in \Lambda_{(z^*,y^*)}$. Then $\langle -z^*,\bar{z}_1+\bar{z}_2\rangle+\langle  y^*, \bar{y}_2\rangle >0$ because $(\bar{z}_1+\bar{z}_2,\bar{y}_2) \in \mbox{Graph} (\Delta)\setminus \{(0,0)\}\subset \mbox{int} (\mbox{Graph}(-S_{(z^*,y^*)}))$. On the other hand, since $\bar{z}_1 \in G(\bar{x}) \subset G(\bar{x})+Z_+$, we have $G(\bar{x})\cap (\bar{z}_1-Z_+)\not=\emptyset$. Hence $(\bar{z}_1,\bar{y}_1)\in \mbox{Graph}(V)$. Analogously, since $\bar{z}_1+\bar{z}_2 \in G(\bar{x})+Z_+$, we have $(\bar{z}_1+\bar{z}_2,\bar{y}_1)\in \mbox{Graph}(V)\subset  \mbox{Graph}(V+Y_+)$. As a consequence, by definition of $S_{Y_+}(y_0)$, we have $\langle y^*,y_0\rangle\leq \langle z^*,\bar{z}_1+\bar{z}_2\rangle+\langle y^*,\bar{y}_1\rangle$. Therefore, if we define $\hat{y}:= \bar{y}_1 + \bar{y}_2-y_0 \in -Y_+$, then we obtain
$\langle y^*,\hat{y}\rangle = \langle z^*, (\bar{z}_1+\bar{z}_2)-(\bar{z}_1+\bar{z}_2)\rangle +\langle y^*, \bar{y}_1 + \bar{y}_2-y_0\rangle =
\langle z^*, \bar{z}_1+\bar{z}_2\rangle+ \langle y^*, \bar{y}_1 \rangle- \langle y^*, y_0 \rangle + \langle -z^*, \bar{z}_1+\bar{z}_2\rangle+ \langle y^*, \bar{y}_2 \rangle>0$. But $ \hat{y} \in -Y_+$  and $y^* \geq 0$ on $Y_+$, a contradiction. Hence  $y_0$ is  a nondominated point of $(P[\Delta])$ and $\Delta $ is a Lagrange multiplier of $(P(0))$ at  $y_0$.

To check the second sentence in the statement, we assume that $y_0$ is a minimal point of $(P(0))$ and $y_0 \in F(x_0)$ for some $x_0 \in \Omega$ such that $G(x_0)\cap (-Z_+)\not = \emptyset$. As we proved above,  the process $\Delta $ is a Lagrange multiplier of $(P(0))$ at  $y_0$. Furthermore, $y_0=y_0+0 \in F(x_0)+\Delta(G(x_0)+Z_+)$, and so, $y_0$ is a minimal point of $(P[\Delta])$ achieved at $x_0$.

Let us finish showing (\ref{InterseccionTeo}). Let $u \in \Delta(G(x_0)+Z_+)\cap (-Y_+)$. Since $y_0$ is a minimal point of $(P[\Delta])$ achieved at $x_0$, we have that $y_0 \in F(x_0)$ and $\left( F(x_0)+ \Delta(G(x_0)+Z_+) \right) \cap  (y_0-Y_+)\subseteq \left\lbrace y_0 \right\rbrace + Y_+$. Thus $y_0+u \in \{y_0\}+Y_+$ and $u \in  (-Y_+)\cap Y_+$.
\end{proof}

\begin{remark} \label{IntersectionTeo1}
In the proof of inclusion (\ref{InterseccionTeo}), we only assume that $\Delta$ is a Lagrange multiplier. Therefore, such an inclusion still holds if we consider Lagrange multipliers of $(P(0))$ at $y_0$ which do not  belong to $\Gamma_{y_0}$.
\end{remark}

Next, we adapt Example 4.4 in \cite{GC-Melguizo2019} to our current set-valued  context to check that the proper inclusion $\text{Graph}(\Delta)\setminus\left\lbrace (0,0)\right\rbrace \subseteq \text{int} (\text{Graph}(-S_{(z^*,y^*)}))$ from Definition \ref{DefGammay0} becomes decisive to detect the optimal point in $(P[\Delta])$. The inclusion is directly related to the property that $\Delta$ is pointed. Let us note that additional requirement is assumed neither on the optimal point (such as some type of proper efficiency) nor on the ordering cone.

\begin{example}\label{Example}
Let $X=Y=\mathbb{R}^2$, $Y_+=\left\lbrace (x_1,x_2) \in \mathbb{R}^2 : x_1 \geq 0, x_2 \geq 0  \right\rbrace$, $Z=\mathbb{R}$, $Z_+=\mathbb{R}_+$, $\Omega= \left\lbrace (x_1,x_2) \in \mathbb{R}^2 : x_2 > 0  \right\rbrace \cup \lbrace (0,0) \rbrace$, $F(x_1,x_2)=\{(x_1,x_2)\}$ , and $G(x_1,x_2)=\{x_2-1 \}$. 

Then $y_0=(0,0)$ is a minimal point of ($P(0)$). Indeed,  for $z \geq -1$ we have $S(z)=\left\lbrace (x_1,x_2) \in \mathbb{R}^2 : 0 < x_2 \leq 1+z  \right\rbrace \cup \lbrace (0,0) \rbrace$,  and for $z<-1$ we have $S(z)=\emptyset$. Besides $V(z)=F(S(z))=S(z)$, $\forall z \in \mathbb{R}$.  Define the set $A:=\lbrace (x_1,x_2) \in \mathbb{R}^2 : 0 \leq x_1, 0 \leq x_2 \rbrace \cup \lbrace (x_1,x_2) \in \mathbb{R}^2 : x_1 < 0 < x_2 \rbrace$. For $z\geq -1$ we have $(V+Y_+)(z)= V(z)+Y_+ =A$, and for $z<-1$ we have $(V+Y_+)(z)=\emptyset$.  Therefore, for $z\geq -1$ we have 
$\mbox{Min}(\left\lbrace F(x): x \in \Omega, \,z \in  G(x)+ Z_+ \right\rbrace)= \lbrace (0,0) \rbrace$, and for 
$z<-1$ we have $\mbox{Min}(\left\lbrace F(x): x \in \Omega, \,z \in  G(x)+ Z_+ \right\rbrace)=\emptyset$.
 
Now, consider the particular program ($P(0)$) and $(0,y_0)=(0,(0,0))$. Then Graph$(V+Y_+)= [-1, \infty) \times A \subset \mathbb{R}^3$, $ S_{Y_+}(0,0)=\left\lbrace (0,0,\lambda):\lambda > 0 \right\rbrace$, and $-S_{(z^*,y^*)}(z)=\mathbb{R}_+$, $\forall (z^*, y^*) \in S_{Y_+}(0,0)$. We take $\Delta_{(0,0)} \in P(\mathbb{R},\mathbb{R}^2)$ such that 
$\mbox{Graph}(\Delta_{(0,0)})=\left\lbrace(z,(y_1,y_2))): | y_2 | \geq \max\{|z|, | y_1 |\} \right\rbrace. $
Then, for $x_2 \leq 1$ we have $\Delta_{(0,0)}(G(x_1,x_2)+Z_+) = \Delta_{(0,0)}(x_2-1+\mathbb{R}_+) =\left\lbrace (x'_1,x'_2) \in \mathbb{R}^2 : x'_2 \geq | x'_1 |  \right\rbrace$, and for $x_2>1$ we have $\Delta_{(0,0)}(G(x_1,x_2)+Z_+) = \Delta_{(0,0)}(x_2-1+\mathbb{R}_+)=\left\lbrace (x'_1,x'_2) \in \mathbb{R}^2 : x'_2 \geq | x'_1 | \, , x'_2 \geq x_2-1 \right\rbrace$. Clearly $\bigcup_{x \in \Omega}{(F(x)+\Delta(G(x)+Z_+))}=A$. Hence $y_0=(0,0)$ is a minimal point of ($P[\Delta_{(0,0)}]$).
Note that $-S_{(z^*,y^*)}$ does not fit as a Lagrange multiplier for any $(z^*,y^*) \in S_{Y_+}(0,0) $ because  $\bigcup_{x \in \Omega}(F(x)-S_{(z^*,y^*)}(G(x)+Z_+))= \mathbb{R} \times \mathbb{R}_+$. Therefore  $y_0=(0,0)$ does not become a minimal point of ($P[-S_{(z^*,y^*)}]$). 
\end{example}

\section{Duality} \label{Duality}
This section is devoted to the study of a dual problem for the program $(P(0))$. We develop a geometric duality approach analogous to the scalar case approach. In our setting, we make the pointed closed convex processes play the same role as the linear continuous operators play in the scalar case.  Optimal points of the dual problem are weak maximal points. This is an
interesting feature from a practical point of view because weak maximal points can
be obtained via linear scalarizations.

We define the auxiliary set-valued map
$\Psi: P(Z,Y) \rightrightarrows Y$and the dual set-valued  map $\Phi: P(Z,Y)  \rightrightarrows Y$ respectively by 
$\Psi (\Delta):= \bigcup_{x \in \Omega}( F(x)+\Delta(G(x)+Z_+))$ and $\Phi (\Delta):=  \left\lbrace  y\in  \mbox{cl}(\Psi(\Delta)) : y -Y_+ \cap \Psi(\Delta) \subseteq y+ Y_+ \right\rbrace$, $\forall \Delta \in  P(Z,Y)$.  Then the image of a process $\Delta$ by the dual map is the set of points in the border of $\Psi (\Delta)$ which are nondominated by $\Psi (\Delta)$. Theorem~\ref{ThLagMult} above states that if $ND(P(0))\not = \emptyset$, then $\Phi (\Delta) \neq \emptyset$ for every $\Delta \in  \Gamma_{y_0}$. The dual problem of $(P(0))$ is defined by
\[
\text{  WMax \,} \Phi (\Delta) \text{ \  such that \ } \Delta \in P(Z,Y). \hspace*{3cm}(D(0))
\]

\begin{definition}
Consider the program $(D(0))$. A pair  $(\Delta_0 ,y_0)\in P(Z,Y) \times \Phi(P(Z,Y))$ verifying that $y_0 \in \Phi(\Delta_0) $  is said to be a weak  maximizer of $(D(0))$ if there is no $(\Delta' ,y')\in P(Z,Y) \times \Phi(P(Z,Y))$ with $y' \in \Phi(\Delta')$ and such that $y_0 <y'$. Then $y_0$ is said to be a weak maximal point  of $(D(0))$, written $y_0 \in$ WMax$(D(0))$. 
\end{definition}
The first sentence in the following result is on weak duality. However,  the second one is on strong duality (based on Theorem~\ref{ThLagMult}).

\begin{theorem}\label{Tdualidad}
Let $y_0\in ND(P(0))$. The following statements hold.
\begin{itemize}
\item[(i)]  There is no $y_1 \in \cup_{\Delta \in P(Z,Y)} \Phi(\Delta)$ such that $y_0 < y_1$. In particular, there is no $y_1\in WMax(D(0))$ such that $y_0 < y_1$.
\item[(ii)] Assume that there exists $x_1 \in \Omega$ for which $G(x_1) \cap \left(  - \text{int}(Z_+) \right)\neq \emptyset$. Then, there exists $\Delta_0 \in \Gamma_{y_0} \subset P(Z,Y) $ such that $ y_0 \in \Phi (\Delta_0)$,  that is, $y_0$ is a nondominated point of the program $(P[\Delta_0])$. Furthermore, if  $y_0 \in F(x_0)$ for a feasible solution $x_0$, then the nondominated point of $(P[\Delta_0]) $ is also achieved at $x_0$ and we have $\Delta(G(x_0)+Y_+)\cap (-Y_+) \subseteq Y_+\cap (-Y_+)$.
\end{itemize}
\end{theorem}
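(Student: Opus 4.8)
The plan is to dispatch (ii) as a near-immediate consequence of the machinery already in place, and to prove (i) by hand via a single geometric inclusion followed by a properness argument. For (ii), I would first invoke Lemma~\ref{Lemma_usamos_Hipo_Slater}: under the Slater qualification each $\Lambda_{(z^*,y^*)}$ is non-empty, and since $\mathcal{S}'_{Y_+}(y_0)\neq\emptyset$ (guaranteed by Eidelheit separation, as recorded in the remark after Definition~\ref{Definicion_S_Y}), the union $\Gamma_{y_0}$ is non-empty. Choosing any $\Delta_0\in\Gamma_{y_0}$, Theorem~\ref{ThLagMult} says $\Delta_0$ is a Lagrange multiplier of $(P(0))$ at $y_0$, i.e.\ $y_0\in\mbox{cl}(\Psi(\Delta_0))$ and $y_0$ is nondominated by $\Psi(\Delta_0)$; since $\Psi(\Delta_0)=\cup_{x\in\Omega}(F(x)+\Delta_0(G(x)+Z_+))$, this is precisely the assertion $y_0\in\Phi(\Delta_0)$. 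The ``furthermore'' clause is then read off directly from the second half of Theorem~\ref{ThLagMult}: the hypothesis $y_0\in F(x_0)$ yields that $y_0$ is a minimal point of $(P[\Delta_0])$ achieved at $x_0$, and the stated inclusion is exactly~(\ref{InterseccionTeo}).

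For (i), the crux is the weak-duality inclusion $V(0)\subseteq\Psi(\Delta)$, valid for \emph{every} $\Delta\in P(Z,Y)$. To see it, take $x\in S(0)$; then $G(x)\cap(-Z_+)\neq\emptyset$ gives $0\in G(x)+Z_+$, and since $\mbox{Graph}(\Delta)$ is a cone we have $(0,0)\in\mbox{Graph}(\Delta)$, so $0\in\Delta(0)\subseteq\Delta(G(x)+Z_+)$. Hence $F(x)\subseteq F(x)+\Delta(G(x)+Z_+)\subseteq\Psi(\Delta)$, and taking the union over $S(0)$ gives $V(0)\subseteq\Psi(\Delta)$, whence $\mbox{cl}(V(0))\subseteq\mbox{cl}(\Psi(\Delta))$. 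Now I argue by contradiction: suppose some $y_1\in\Phi(\Delta)$ satisfies $y_0<y_1$, i.e.\ $y_1-y_0\in\mbox{int}(Y_+)$. Since $y_0\in\mbox{cl}(V(0))\subseteq\mbox{cl}(\Psi(\Delta))$, I choose $v\in\Psi(\Delta)$ close enough to $y_0$ that $y_1-v\in\mbox{int}(Y_+)$ still holds (openness of the interior); then $v\in\Psi(\Delta)\cap(y_1-Y_+)$, so the nondomination of $y_1$ forces $v\in y_1+Y_+$, i.e.\ $w:=v-y_1\in Y_+$. But also $-w=y_1-v\in\mbox{int}(Y_+)$, so $0=w+(-w)\in w+\mbox{int}(Y_+)$, an open subset of $Y_+$; thus $0\in\mbox{int}(Y_+)$, which forces $Y_+=Y$ and contradicts properness. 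The ``in particular'' statement then follows since $WMax(D(0))\subseteq\bigcup_{\Delta\in P(Z,Y)}\Phi(\Delta)$.

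The only genuinely delicate point is this final contradiction in (i). It is tempting to conclude via pointedness of $Y_+$, but here $Y_+$ is assumed only proper and solid, so instead I exploit that $w+\mbox{int}(Y_+)$ is an open set contained in $Y_+$ and containing the origin, which already forces $Y_+=Y$. A secondary subtlety is that $y_0$ lies only in the closure of $V(0)$, so the nondomination inequality cannot be applied to $y_0$ itself; approximating $y_0$ by values $v\in\Psi(\Delta)$ and using the openness of $\mbox{int}(Y_+)$ to keep $v$ inside $y_1-Y_+$ is what makes that inequality usable. Everything else—part (ii) and the weak-duality inclusion—is routine once Theorem~\ref{ThLagMult} is granted.
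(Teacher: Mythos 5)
Your proposal is correct and follows essentially the same route as the paper: part (ii) is read off from Theorem~\ref{ThLagMult} (with Lemma~\ref{Lemma_usamos_Hipo_Slater} and the separation remark guaranteeing $\Gamma_{y_0}\neq\emptyset$), and part (i) rests on the same key observation that $0\in\Delta(0)\subseteq\Delta(G(x)+Z_+)$ for feasible $x$ (so $V(0)\subseteq\Psi(\Delta)$), followed by approximating $y_0$ inside the open set $y_1-\mbox{int}(Y_+)$ and invoking the nondomination of $y_1$. Your explicit closing step --- that $0\in\mbox{int}(Y_+)$ would force $Y_+=Y$, so the contradiction comes from properness rather than pointedness (which is not assumed in this section) --- merely makes precise the contradiction the paper states tersely as ``but $y_0'<y_1$, a contradiction.''
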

\begin{proof}
(i) Let us fix $\Delta \in P(Z,Y)$ and $y_1\in \Phi(\Delta)$. Then $y_1 \in \mbox{cl}(\Psi(\Delta))$ and $y_1$ is nondominated by $\Psi(\Delta)$. On the other hand, since $y_0\in ND(P(0))$, we have that $y_0 \in \mbox{cl}(V(0))$. Assume that $y_0 < y_1$, or equivalently, that $y_0 \in y_1 - \mbox{int}(Y_+)$. Then, we can pick some $y'_0 \in (y_1 - \mbox{int}(Y_+)) \cap V(0)$. Since $y'_0 \in V(0)$, there exists $x'_0 \in \Omega $ such that $y'_0 \in F(x'_0)$ and $0 \in G(x'_0)+Z_+$. Then  $y'_0=y'_0+0 \in F(x'_0)+\Delta(G(x'_0)+Z_+) \subset \Psi(\Delta)$. But $y'_0<y_1$, a contradiction. The second part of (i) is immediate.  Sentence $(ii)$ is a consequence of Theorem~\ref{ThLagMult}.
\end{proof}

\begin{example}\label{ExampleDuality}
In Example \ref{Example} we have 
$ND(P(0))= (-\infty , 0] \times \{0 \}$. Then for every $y_0=(x_0, 0) \in ND(P(0))$, we have 
$S'_{Y_+}(y_0)= \{ (0,0,\lambda) \in \mathbb{R}^3: \lambda >0 \}.$ Therefore
$\mbox{Graph}(-S_{(z^*,y^*)})(z)= \mathbb{R}^2 \times \mathbb{R}_+ $, for every $(z^*,y^*) \in S'_{Y_+}(y_0) $ and $z \in Z$.
Consequently, for every $\Delta \in \Gamma_{y_0}$ we have the inclusion $\mbox{Graph}(\Delta) \setminus \{ (0,0,0) \}\subseteq \mathbb{R}^2 \times (\mathbb{R}_+)\setminus \{0\}$. This yields 
$\Psi (\Delta)= \bigcup_{x \in \Omega}{     \left\lbrace  F(x)+\Delta(G(x)+Z_+) \right\rbrace} = A$, for every $\Delta \in \Gamma_{y_0}$. Therefore, every nondominated point of $(P(0))$ is a nondominated point of $(P[\Delta])$. In this case both sets of nondominated points coincide.
\end{example}

\section{Sensitivity Analysis}\label{Sensitivity Analysis}
In this section, we analyse the sensitivity of $(P(0))$ by means of the contingent derivative.  In our analysis, the concept of Lagrange process plays a crucial role. But we do not use its original definition given in \cite[Definition~3.7]{GC-Melguizo2019}. Instead of that, we introduce it by employing the concept of adjoint process. The adjoint of a process was introduced in \cite{Rockafellar1970}, and it is also known as the transpose of a process \cite[Definition~2.5.1]{aubin1990set}.

From now on, we assume that the ordering cone $Y_+$ on the preordered normed space $Y$ is pointed.  Then, the preorder on Y induced by $Y_+$  becomes an order. 

\begin{definition} Let $P: Z \rightrightarrows Y$ be a process and  $Q : Y^* \rightrightarrows Z^* $  a convex process. 
\item[(i)] The adjoint of $P$ is the process $P^\star : Y^* \rightrightarrows Z^* $ defined by $P^\star (y^*):=\left\lbrace z^* \in Z^* : \langle z^*,z \rangle \leq \langle y^*,y\rangle, \forall
z \in Z, y \in P(z) \right\rbrace$, $\forall y^* \in Y^*$.
\item[(ii)] The adjoint of $Q$ is the process $Q^\star: Z \rightrightarrows Y$  defined by
$Q^\star (z):=\{y \in Y :  \langle z^*,z\rangle \leq \langle y^*,y \rangle,\, \forall
y^* \in Y^*, z^* \in Q(y^*)\}$, $\forall z \in Z$.   
\end{definition}

It is clear that the processes $P^\star$ and $Q^\star$ above are  convex and closed.  %On the other hand, for the biadjoint $(P^{\star})^{\star}$ of a closed convex process to coincide with the initial $P$, the definition of the biadjoint is not symmetric. 

\begin{remark} \label{PrAdjSzy} The adjoint of a process is closely related to the set-valued maps $S_{(z^*,y^*)}$ introduced in Section \ref{The Set-valued Lagrange Multipliers}. Indeed, 
given a convex process $Q: Y^* \rightrightarrows Z^*$ and $z\in Z$, we have $Q^\star (z)=  \{ y \in Y :  \langle z^*,z\rangle + \langle y^*,-y \rangle \leq 0,\,\forall y^* \in Y^*, z^* \in Q(y^*)\}$.
Now, taking into account that
$ -S_{(z^*,y^*)}(z)=\left\lbrace y\in Y : \right.$ $\left.  \langle z^*,z\rangle + \langle y^*,-y \rangle \leq 0 \right\rbrace , $
we get that $Q^\star (z)=  \bigcap_{(y^*,z^*) \in \text{Graph}(Q)} (-S_{(z^*,y^*)}(z))$.

%as $-S_{(z^*,y^*)}(z)=\left\lbrace y\in Y : \langle z^*,z\rangle \leq \langle y^*,y \rangle  \right\rbrace$ for all $z \in Z$. \\
\end{remark}

%Now we reformulate the concept of Lagrange process introduced in \cite[Definition 3.7]{GC-Melguizo2019} by using the concept of adjoint process.
Let us fix a point $y_0\in \mbox{ND}(P(0))$. We define the process $S_{Y_+}(y_0):Y^* \rightrightarrows Z^*$ by the following abuse of notation. For every $y^*\in Y^*$, the image $S_{Y_+}(y_0)(y^*)$ is the set of $z^* \in Z^*$ such that $(z^*,y^*)$ belongs to the set $S_{Y_+}(y_0)$ in Definition~\ref{Definicion_S_Y}. In this way, we use the same symbol $S_{Y_+}(y_0)$ for this process and the set in Definition~\ref{Definicion_S_Y}. The adjoint that process appears in the following definition.

\begin{definition}\label{defi_LagrangeProcess}
Let $y_0\in ND(P(0))$. We define the Lagrange process of $(P(0))$ at $y_0$ as the closed convex process  $\mathcal{L}_{y_0}:Z\rightrightarrows Y $  such that 
\begin{equation}\label{Ec_defi_LagrangeProcess}
\text{Graph}( \mathcal{L}_{y_0}):= \mathcal{S}_{Y_+}(y_0)^\star=\bigcap_{(z^*,y^*) \in  \mathcal{S}_{Y_+}(y_0)} (-S_{(z^*,y^*)}(z)).
\end{equation}
\end{definition}

\begin{remark}\label{CondicionesAdicionalesLM}
In general, a Lagrange process is not necessarily a Lagrange multiplier for the same program (see \cite[Example 4.4]{GC-Melguizo2019}).  On the other hand,  any condition (a)--(d) in the statement of Theorem \ref{thsensi} below assure that the Lagrange process considered there becomes a Lagrange multiplier.
\end{remark}

%\begin{remark} \label{CLSzy}  
%Consequently, from Remark \ref{PrAdjSzy}, we get that $\mathcal{L}_{y_0}$ can be expressed  as
%\[
%\mathcal{L}_{y_0} (z)=  \bigcap_{(y^*,z^*) \in  \mathcal{S}_{Y_+}(y_0)} {-S_{(z^*,y^*)}(z)}
%\] 
%in terms of the set-valued maps $S_{(z^*,y^*)}$
%\end{remark}
Next, we state some terminology on tangent cones and set-valued derivatives (see \cite{aubin1990set} for further details).
\begin{definition}\label{contingentcone}
Let $Y$ be a normed space, $A\subset Y$  a non-empty set, $y\in $ $\mbox{cl}(A)$, and $d$ the metric given by the norm on $Y$. The contingent cone to $A$ relative at $y$, $T_{A}\left( y\right)$,  is the cone defined by $T_{A}\left( y\right) :=\left\{v\in Y\colon \underset{h\rightarrow 0+}{\text{ }\lim
\inf }\ \frac{d\left( A,y+hv\right) }{h}=0\right\}$.
\end{definition}
%The following known characterization, although straightforward, is very useful in order to work with contingent cones. An element $v\in T_{A}\left( x\right) $ if and only if there exist two sequences, $\{h_{n}\}_{n=1}^{\infty }\subset \mathbb{R}_{+}\backslash \{0\}$ converging to $0$ and\ $\{v_{n}\}_{n=1}^{\infty }\subset X$ converging to $v$,  such that $x+h_{n}v_{n}\in A$  for all $n\in \mathbb{N}.$ 
Next, we introduce the contingent derivative.  Its usual symbol is $D$, and it has not to be confused with that used to denote the dual problem introduced in Section \ref{Duality}.
\begin{definition}
Let $F:Z \rightrightarrows Y$ be a set-valued map and $(z_0,y_0)\in$ Graph$(F)$. The contingent derivative of $F$ at $(z_0,y_0)$ is the set-valued map $DF(z_0,y_0):Z \rightrightarrows Y$ defined by Graph$(DF(z_0,y_0)):=T_{\mbox{Graph(F)}}(z_0,y_0)$. 
\end{definition}
%From now on, we assume that the point $y_0$ where the sensitivity analysis is carried out is achieved at a feasible solution, that is, $y_0 \in F(x_0)$ for some feasible solution $x_0$ of the program $(P(0))$.

Next, our first result on sensitivity.  The polar cone of a set will be used in the corresponding proof. The negative polar cone of a set $A_1 \subseteq Y$ (resp. $A_2 \subseteq Y^*$) is defined as $A_1^-:=\{ y^* \in Y^*: \langle y^*,y \rangle  \leq 0,\,   \forall  y \in A_1 \}$ (resp.  as $A_2^-:=\{ y \in Y: \langle y^*,y \rangle   \leq 0,\, \forall y^* \in A_2\}$). Their corresponding positive polar cones are defined as $A_1^+:=-A_1^-\subset Y^*$ and $A_2^+:=-A_2^-\subset Y$. Clearly $(A_1^-)^-=(A_1^+)^+$. Note that polar cones of sets in $Y^*$ are sets in $Y$ (not in $Y^{**}$).

\begin{theorem} \label{TeorConting}
Let $y_0$ be a minimal point of $(P(0))$ and $\mathcal{L}_{y_0}$ the Lagrange process of $(P(0))$ at $y_0$. Assume that there exists $x_1 \in \Omega$ for which $G(x_1) \cap \left(  - \text{int}(Z_+) \right)\neq \emptyset$. Then
%\[
%\text{Graph}(L_{y_0}(- \cdot)=T_{Graph \, W_{Y_+}}(0,y_0).
%\]
$\mathcal{L}_{y_0}(-z)= D (V+Y_+)(0,y_0)(z),\, \forall  z \in Z$.

\end{theorem}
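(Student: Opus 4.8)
The goal is to prove the identity of two cones in $Z \times Y$: the graph of $\mathcal{L}_{y_0}$ (suitably reflected in $z$) and the contingent cone $T_{\mathrm{Graph}(V+Y_+)}(0,y_0)$.

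=== PROOF PLAN ===The plan is to recognize both sides of the claimed equality as cones in $Z\times Y$ that are polar-dual to one another, and then to invoke the bipolar theorem. Write $E:=\mbox{Graph}(V+Y_+)$. First I would unravel the adjoint in Definition~\ref{defi_LagrangeProcess}: viewing $\mathcal{S}_{Y_+}(y_0)$ as the process $Y^*\rightrightarrows Z^*$ and using the defining formula for $Q^\star$, the graph of the map $z\mapsto \mathcal{L}_{y_0}(-z)$ is exactly
\[
\{(z,y)\in Z\times Y : \langle z^*,z\rangle+\langle y^*,y\rangle\geq 0,\ \forall (z^*,y^*)\in\mathcal{S}_{Y_+}(y_0)\}=\mathcal{S}_{Y_+}(y_0)^{+},
\]
the positive polar cone of $\mathcal{S}_{Y_+}(y_0)$ computed back in $Z\times Y$. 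Since the graph of the contingent derivative is by definition the contingent cone $T_{E}(0,y_0)$, the theorem reduces to the single cone identity $\mathcal{S}_{Y_+}(y_0)^{+}=T_{E}(0,y_0)$.

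The next step is to identify $\mathcal{S}_{Y_+}(y_0)$ itself as a polar, namely $\mathcal{S}_{Y_+}(y_0)=(E-(0,y_0))^{+}$. Reading Definition~\ref{Definicion_S_Y}, the right-hand inequality $\langle y^*,y_0\rangle\leq\langle z^*,z\rangle+\langle y^*,y\rangle$ for all $(z,y)\in E$ is precisely the statement that $(z^*,y^*)\in(E-(0,y_0))^{+}$, where I use $(0,y_0)\in E$ (valid because $y_0$ is a minimal point, hence $y_0\in V(0)$). The left-hand inequality, after the substitutions $z'=-w$ with $w\in Z_+$ and $y'=y_0-v$ with $v\in Y_+$, becomes $\langle z^*,w\rangle+\langle y^*,v\rangle\geq 0$ for all $(w,v)\in Z_+\times Y_+$, i.e. $(z^*,y^*)\in(Z_+\times Y_+)^{+}$. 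I would then show this second condition is redundant: the monotonicity of $V+Y_+$ (if $w\in Z_+$ then $S(0)\subseteq S(w)$, hence $V(0)\subseteq V(w)$), together with $y_0\in V(0)$, yields $(0,y_0)+(Z_+\times Y_+)\subseteq E$, so that $Z_+\times Y_+\subseteq E-(0,y_0)$ and therefore $(E-(0,y_0))^{+}\subseteq(Z_+\times Y_+)^{+}$. Consequently $\mathcal{S}_{Y_+}(y_0)=(E-(0,y_0))^{+}$.

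Finally I would close the argument with the bipolar theorem in the dual pair $\langle Z\times Y,\ Z^*\times Y^*\rangle$. Combining the two previous steps,
\[
\mathcal{S}_{Y_+}(y_0)^{+}=\big((E-(0,y_0))^{+}\big)^{+}=\mbox{cl}\,\mbox{cone}(E-(0,y_0)),
\]
the last equality being the bipolar identity for the convex set $E-(0,y_0)$, which contains the origin since $(0,y_0)\in E$. Because $E$ is convex, $\mbox{cl}\,\mbox{cone}(E-(0,y_0))$ coincides with the Bouligand contingent cone $T_{E}(0,y_0)$, which gives $\mathcal{S}_{Y_+}(y_0)^{+}=T_{E}(0,y_0)$ and hence the theorem. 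The main obstacle is this bipolar step: one must check that the polar is taken back into the predual $Z\times Y$ (not a bidual) and that the bipolar reproduces the norm-closed conic hull. This is legitimate because the conic hull of a convex set is convex, so its weak and norm closures agree, and because convexity of $E$ lets one identify that closed conic hull with the contingent cone. The Slater hypothesis enters to ensure that $\mathcal{S}_{Y_+}(y_0)$ is a nondegenerate closed convex cone (every admissible $(z^*,y^*)$ has $y^*\neq 0$), so that $\mathcal{L}_{y_0}$ is a genuine Lagrange process and the identity is not vacuous (cf. Remark~\ref{CondicionesAdicionalesLM}).
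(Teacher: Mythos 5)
Your proposal is correct and follows essentially the same route as the paper's proof: identify the graph of $z\mapsto\mathcal{L}_{y_0}(-z)$ with the positive polar cone $\mathcal{S}_{Y_+}(y_0)^{+}$, establish $\mathcal{S}_{Y_+}(y_0)=\left( \mbox{Graph}(V+Y_+)-(0,y_0) \right)^{+}$, apply the bipolar theorem to get $\mbox{cl}(\mbox{cone}(\mbox{Graph}(V+Y_+)-(0,y_0)))$, and use convexity of $\mbox{Graph}(V+Y_+)$ to identify this closed conic hull with the contingent cone $T_{\mbox{Graph}(V+Y_+)}(0,y_0)$. The only difference is that you prove the middle polar identity directly (showing via monotonicity of $V$ and $y_0\in V(0)$ that $Z_+\times Y_+\subseteq \mbox{Graph}(V+Y_+)-(0,y_0)$, so the $(Z_+\times Y_+)^{+}$ constraint is redundant), whereas the paper imports this as \cite[Lemma 3.3 $(iv)$]{GC-Melguizo2019} --- a self-contained refinement of the same argument rather than a different method.
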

\begin{proof}
Let us denote by $\mathcal{L}^-_{y_0}$ the process from $Z$ to $Y$ defined by $\mathcal{L}^-_{y_0}(z):=\mathcal{L}_{y_0}(-z)$, $\forall z \in Z$. Now, by (\ref{Ec_defi_LagrangeProcess}), we have Graph($\mathcal{L}^-_{y_0}$)$=\{(-z,y)\in Z \times Y \colon y \in \mathcal{L}_{y_0}(z)\}= \{(-z,y)\in Z \times Y \colon \langle z^*,z \rangle +\langle y^*,-y\rangle \leq 0, \forall (z^*,y^*)\in \mathcal{S}_{Y_+}(y_0)\}=\{(z,y)\in Z \times Y \colon \langle z^*,z \rangle +\langle y^*,y \rangle \geq 0, \forall (z^*,y^*)\in \mathcal{S}_{Y_+}(y_0)\}=\mathcal{S}_{Y_+}(y_0)^+$.  By \cite[Lemma 3.3 $(iv)$]{GC-Melguizo2019}), we get $ \mathcal{S}_{Y_+}(y_0)=\left( \mbox{Graph}(V+Y_+)-(0, y_0) \right)^+$. Now, using Bipolar Theorem, we have $\mbox{Graph}(\mathcal{L}^-_{y_0})=\left( \mbox{Graph}(V+Y_+)-(0, y_0) \right)^{++}=\mbox{cl}(\mbox{cone} (\mbox{Graph}(V+Y_+)-(0, y_0)))$. On the other hand,  the set $\mbox{Graph}(V+Y_+)$ is convex. Then \cite[Proposition 4.2.1]{aubin1990set}) applies to provide $\mbox{cl}(\mbox{cone} (\mbox{Graph}(V+Y_+)-(0, y_0)))= T_{\mbox{Graph}(V+Y_+)}(0,y_0)$.
Consequently $\mbox{Graph}(\mathcal{L}^-_{y_0})$ coincides with $T_{\mbox{Graph}(V+Y_+)}(0,y_0)$, which directly yields that $\mathcal{L}_{y_0}(-z)= D (V+Y_+)(0,y_0)(z)$, for every $z \in Z$.
\end{proof}

%The following lemma is a  consequence of \cite[Theorem  13.1.1]{Zalinescu2015}. It shows that the minimal elements of the contingent derivatives of $V$ and $(V+Y_+)$ are closely related. Such a relationship will be used in our last result on sensitivity.  

Next, we introduce a derivative  proposed by Shi in \cite{Shi1991}. We will use it to prove the following lemma.% in the corresponding proof (see also \cite[Definition~11.3.2]{Zalinescu2015}). 
\begin{definition}\label{Defi_derivada_S}
Let $F:Z \rightrightarrows Y$ be a set-valued map and $(z_0,y_0)\in$ Graph$(F)$. The S-derivative of $F$ at $(z_0,y_0)$  is the set-valued map $D_SF(z_0,y_0):Z \rightrightarrows Y$ defined in the following way: for any direction $z \in Z$, the point $y$ belongs to $D_SF(z_0,y_0)(z)$ if and only if there exist two sequences $\{h_{n}\}_{n=1}^{\infty }\subset \mathbb{R}_{+}\backslash \{0\}$ and\ $\{(z_n,y_n)\}_{n=1}^{\infty }\subset Z\times Y$, such that the sequence $\{(z_n,y_n)\}_{n=1}^{\infty}$ converges to $(z,y)$, the sequence $\{h_nz_n\}_{n=1}^{\infty}$ converges to $0$, and $y_0+h_ny_n\in F(z_0+h_nz_n)$,  $\forall n \in \mathbb{N}$.
\end{definition}
Fixed $y_0 \in (V+Y_+)(0)$, it is straightforward to check that 
\begin{equation}\label{Inclusion_Derivada_S}
\text{Graph(}D_S (V+Y_+)(0,y_0))\subseteq \mbox{cl}(\mbox{cone} (\mbox{Graph}(V+Y_+)-(0, y_0))).
\end{equation}

In what follows, we denote by $S_Y$ the unit sphere in $Y$. Let us recall that the marginal set-valued map $M: Z\rightrightarrows Y$ is defined by $M(z):= \text{Min}(V(z)$), $\forall z \in Z$.
\begin{lemma} \label{sens1}
Let $y_0$ be a minimal point of $(P(0))$ and $\mathcal{L}_{y_0}$ the Lagrange process of $(P(0))$ at $y_0$. Assume that the set $Y_+ \cap S_Y $ is compact and that $\mathcal{L}_{y_0}$ is a Lagrange multiplier of $(P(0))$ at $y_0$.
Then $\text{Min} \,  DV(0,y_0)(z)   = \text{Min} \,  D(V+Y_+)(0,y_0)(z), \, \forall z \in Z$.
%$\text{GHe} \,  DM(0,y_0)(z) = \text{GHe} \,  D(V+Y_+)(0,y_0)(z)= \text{GHe}\, L_{y_0}(-z)$ 
\end{lemma}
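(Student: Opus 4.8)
The plan is to reduce the equality of minimal-point sets to an exact identity between the two contingent derivatives, namely $D(V+Y_+)(0,y_0)(z)=DV(0,y_0)(z)+Y_+$ for every $z\in Z$, and then to invoke the elementary fact that, for a pointed convex cone $Y_+$ and any $A\subset Y$, one has $\text{Min}(A)=\text{Min}(A+Y_+)$. This auxiliary identity is a short direct check using that $y_0\in\text{Min}(A)$ is equivalent to $A\cap(y_0-Y_+)=\{y_0\}$ under pointedness, so I would record it first. Throughout I would write $A_z:=DV(0,y_0)(z)$ and $B_z:=D(V+Y_+)(0,y_0)(z)$, and recall that, by convexity of $\mbox{Graph}(V+Y_+)$, the cone $T_{\mbox{Graph}(V+Y_+)}(0,y_0)$ equals $\mbox{cl}(\mbox{cone}(\mbox{Graph}(V+Y_+)-(0,y_0)))$, so that $D(V+Y_+)(0,y_0)(z)=\mathcal{L}_{y_0}(-z)$ by Theorem~\ref{TeorConting}.

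The inclusion $A_z+Y_+\subseteq B_z$ is the easy half. Given $a\in A_z$ and $k\in Y_+$, the sequential description of the contingent cone yields $h_n\to0+$ and $(z_n,a_n)\to(z,a)$ with $y_0+h_na_n\in V(h_nz_n)$; adding $h_nk\in Y_+$ shows $y_0+h_n(a_n+k)\in V(h_nz_n)+Y_+$, so $(z,a+k)\in T_{\mbox{Graph}(V+Y_+)}(0,y_0)$, that is, $a+k\in B_z$.

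The reverse (domination) inclusion $B_z\subseteq A_z+Y_+$ is the heart of the proof and the main obstacle. For $b\in B_z$ the contingent characterization gives $h_n\to 0+$, $(z_n,w_n)\to(z,b)$ and a splitting $y_0+h_nw_n=v_n+k_n$ with $v_n\in V(h_nz_n)$ and $k_n\in Y_+$; writing $a_n:=(v_n-y_0)/h_n$ and $c_n:=k_n/h_n\in Y_+$ gives $w_n=a_n+c_n\to b$. The crux is to show $\{c_n\}$ is bounded, and I would argue by contradiction. If $\|c_n\|\to\infty$ along a subsequence, then compactness of $Y_+\cap S_Y$ lets me assume $c_n/\|c_n\|\to u\in Y_+\cap S_Y$; the vectors $(z_n/\|c_n\|,\,a_n/\|c_n\|)=\frac{1}{h_n\|c_n\|}\bigl((h_nz_n,v_n)-(0,y_0)\bigr)$ then converge to $(0,-u)$, and being positive multiples of elements of $\mbox{Graph}(V)-(0,y_0)$ their limit lies in $\mbox{cl}(\mbox{cone}(\mbox{Graph}(V+Y_+)-(0,y_0)))$, whence $-u\in\mathcal{L}_{y_0}(0)\cap(-Y_+)$. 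Here the Lagrange-multiplier hypothesis enters: since $y_0$ is attained at a feasible $x_0$, we have $0\in G(x_0)+Z_+$, so $\mathcal{L}_{y_0}(0)\subseteq\mathcal{L}_{y_0}(G(x_0)+Z_+)$, and inclusion (\ref{InterseccionTeo}) together with Remark~\ref{IntersectionTeo1} forces $\mathcal{L}_{y_0}(0)\cap(-Y_+)\subseteq Y_+\cap(-Y_+)=\{0\}$; thus $u=0$, contradicting $\|u\|=1$.

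With $\{c_n\}$ bounded, compactness of $Y_+\cap S_Y$ gives a subsequence with $c_n\to c\in Y_+$, hence $a_n\to b-c$; since $y_0+h_na_n=v_n\in V(h_nz_n)$, the sequential characterization yields $(z,b-c)\in T_{\mbox{Graph}(V)}(0,y_0)$, i.e. $b-c\in A_z$ and $b\in A_z+Y_+$. Combining the two inclusions gives $B_z=A_z+Y_+$, and the auxiliary identity $\text{Min}(A_z)=\text{Min}(A_z+Y_+)$ then yields $\text{Min}\,DV(0,y_0)(z)=\text{Min}\,D(V+Y_+)(0,y_0)(z)$ for every $z\in Z$. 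The delicate point to be handled with care is the simultaneous use of compactness (to extract the limiting direction $u$ and the limit $c$) and of the Lagrange-multiplier inclusion (to rule out the escaping direction $-u$); everything else is routine manipulation of contingent cones.
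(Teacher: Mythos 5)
Your proposal is correct, but it takes a genuinely different route from the paper's. The paper reduces the lemma to the single condition $D_S V(0,y_0)(0)\cap(-Y_+)=\{0\}$, where $D_S$ is Shi's derivative from Definition~\ref{Defi_derivada_S}, and then invokes \cite[Theorem~13.1.1]{Zalinescu2015} as a black box to pass from that condition to the equality of the minimal sets; you instead prove by hand the stronger exact identity $D(V+Y_+)(0,y_0)(z)=DV(0,y_0)(z)+Y_+$ and finish with the elementary fact $\text{Min}(A)=\text{Min}(A+Y_+)$ for pointed $Y_+$. Your normalization argument is essentially the engine hidden inside the cited theorem: the rescaled sequence $(z_n/\|c_n\|,\,a_n/\|c_n\|)$ with steps $h_n\|c_n\|$ (which need not tend to $0$ --- only $h_nz_n\to 0$ does) is precisely a witness that $-u\in D_SV(0,y_0)(0)$, so you are re-deriving the mechanism of \cite[Theorem~13.1.1]{Zalinescu2015} inline; likewise, your derivation of $\mathcal{L}_{y_0}(0)\cap(-Y_+)=\{0\}$ from (\ref{InterseccionTeo}), Remark~\ref{IntersectionTeo1}, and $0\in G(x_0)+Z_+$ is a slightly cleaner version of the paper's detour through $\mathcal{L}_{y_0}(G(x_0))$ and an auxiliary lemma imported from \cite{GC-Melguizo2019}. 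What your route buys is self-containedness and a stronger intermediate result (combined with Theorem~\ref{TeorConting} it gives $\mathcal{L}_{y_0}(-z)=DV(0,y_0)(z)+Y_+$); what it costs is length, since you silently re-prove a known theorem. One small wrinkle to fix: you quote Theorem~\ref{TeorConting}, whose statement assumes the Slater condition, while Lemma~\ref{sens1} does not assume it; however, your argument only needs the inclusion $\mbox{cl}(\mbox{cone}(\mbox{Graph}(V+Y_+)-(0,y_0)))\subseteq\mbox{Graph}(\mathcal{L}^-_{y_0})$, which follows directly from Definitions~\ref{Definicion_S_Y} and~\ref{defi_LagrangeProcess} without any constraint qualification (the paper's own proof reuses the same formula extracted from the proof of Theorem~\ref{TeorConting}), so the wrinkle is cosmetic: cite the inclusion rather than the theorem and the proof stands.
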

\begin{proof}
We will show the equality $D_S V(0,y_0)(0) \cap (-Y_+)=\lbrace 0 \rbrace$. Then  \cite[Theorem~13.1.1]{Zalinescu2015} applies to the set-valued maps $V$ and $V+Y_+$, providing the desired equality.

First, we check the equality $\mathcal{L}_{y_0}(0)\cap (-Y_+)=\lbrace 0 \rbrace$.  Let us fix an arbitrary $x_0 \in \Omega$ such that $G(x_0)\cap (-Z_+) \neq \emptyset $ and $y_0 \in F(x_0)$.  
The equality $\mathcal{L}_{y_0}(G(x_0))\cap (-Y_+)=\lbrace 0 \rbrace$ is aconsequence of (\ref{InterseccionTeo}), Remark \ref{IntersectionTeo1}, and $Y_+ \cap (-Y_+)=\lbrace 0 \rbrace$.
Now pick  $-v \in \mathcal{L}_{y_0}(0)\cap (-Y_+)$ and $-z_+ \in G(x_0)\cap (-Z_+)$.  Then
$(0,-v) \in  $ Graph($\mathcal{L}_{y_0}$). Now,  by \cite[Lemma 4.1 $(i)$]{GC-Melguizo2019},  we obtain $(-z_+,0) \in$ Graph($\mathcal{L}_{y_0}$).  Since Graph($\mathcal{L}_{y_0}$) is a convex cone, we have  $(0,-v)+(-z_+,0)=(-z_+,-v) \in \text{Graph(}\mathcal{L}_{y_0}),$ that is,  $-v \in \mathcal{L}_{y_0}(-z_+)\subseteq \mathcal{L}_{y_0}(G(x_0))$.  Then the condition $v \neq 0 $ contradicts the equality $\mathcal{L}_{y_0}(G(x_0))\cap (-Y_+)=\lbrace 0 \rbrace$. Hence $\mathcal{L}_{y_0}(0)\cap (-Y_+)=\lbrace 0 \rbrace$.

Next, we check the inclusion $D_S V(0,y_0)(0)\subseteq \mathcal{L}_{y_0}(0)$. Let us consider again the set-valued map $\mathcal{L}^-_{y_0}$ defined by $\mathcal{L}^-_{y_0}(z):=\mathcal{L}_{y_0}(-z)$, $\forall z \in Z$. We will prove the more general inclusion $ \text{Graph(}D_S V(0,y_0))\subseteq \text{Graph(}\mathcal{L}^-_{y_0})$. We remind the formula $\text{Graph(}\mathcal{L}^-_{y_0})=\mbox{cl}(\mbox{cone} (\mbox{Graph}(V+Y_+)-(0, y_0)))$ proved in the proof of Theorem \ref{TeorConting}. By (\ref{Inclusion_Derivada_S}), we have $\text{Graph(}D_S (V+Y_+)(0,y_0))\subseteq \text{Graph(}\mathcal{L}^-_{y_0})$. 
Then, since $\text{Graph}(V)\subset \text{Graph}(V+Y_+)$, we get that $\text{Graph(}D_S V(0,y_0))\subset \text{Graph(}D_S (V+Y_+)(0,y_0)$. Thus we have $\text{Graph(}D_S V(0,y_0))\subseteq \text{Graph(}\mathcal{L}^-_{y_0})$.

Finally, combining $D_S V(0,y_0)(0)\subseteq \mathcal{L}_{y_0}(0)$ and $\mathcal{L}_{y_0}(0)\cap (-Y_+)=\lbrace 0 \rbrace$ we obtain $D_S V(0,y_0)(0) \cap (-Y_+)=\lbrace 0 \rbrace$.
\end{proof}

Properly efficient points were introduced for two main reasons: to eliminate anomalous optimal solutions and to propose some scalar problems whose solutions provide most of the efficient points.  Next, we recall some notions regarding proper efficiency.  We say that a point $\bar{y} \in A$ is a positive properly efficient point of a set $A\subset Y$, written $\bar{y}\in$ Pos($A$), if there exists $f \in Y^*$ such that $f(y)>0$, $\forall y \in Y_+\setminus\{0\}$, and $f(\bar{y})\leq f(y)$, $\forall y \in A$. We say that a point $\bar{y} \in A$ is a Henig global properly efficient point of a set $A\subset Y$, written $\bar{y}\in $ GHe($A$), if there exists a pointed cone $\mathcal{K}$ such that  $Y_+\setminus \{0\}\subset \mbox{int}(\mathcal{K})$ and $(A-\bar{y})\cap(-\mbox{int}(\mathcal{K})) = \emptyset$.  We say that a point $\bar{y} \in A$ is a Henig properly efficient point  of a set $A\subset Y$, written $\bar{y}\in$ He($A$), if for some base $\Theta$ of $Y_+$ there is $\varepsilon > 0$ such that $\mbox{cl}(\mbox{cone} (A-\bar{y}))\cap(- \mbox{cl}(\mbox{cone} (\Theta+\varepsilon B_{Y}))) = \{ 0 \}$. We say that a point $\bar{y} \in A$ is a super efficient point of a set $A\subset Y$, written $\bar{y}\in$ SE($A$), if there is a scalar $\rho > 0$ such that $\mbox{cl}(\mbox{cone} (A-\bar{y}))\cap (B_{Y}-Y_+) \subset\rho B_{Y}$.  Regarding the program $(P(z))$, we say that a pair $(x_z,y_z)\in S(z)\times V(z)$ with $y_z \in F(x_z) $   is a  positive properly (resp.  Hening global  properly, Hening  properly, super efficient) minimizer of $(P(z))$ if $y_z\in$ Pos$(V(z))$ (resp. $y_z\in$ GHe$(V(z))$, $y_z\in$ He$(V(z))$, $y_z\in$ SE$(V(z))$). Then $y_z$ is said to be a positive properly (resp.  Hening global  properly, Hening  properly, super efficient) minimal point of $(P(z))$.  By \cite[Proposition 21.4]{Ha2010} we have the  inclusions $\mbox{Pos}(V(z))\subset \mbox{GHe}(V(z))$, $\mbox{SE}(V(z))\subset\mbox{GHe}(V(z))$,  and $\mbox{SE}(V(z))\subset \mbox{He}(V(z))$.

In the statement of the following result, we assume domination property which is usually required in  sensitivity analysis. We say that the set-valued map $V:Z \rightrightarrows Y$ is $Y_+$-dominated by  the set-valued map $M:Z \rightrightarrows Y$  near $0 \in Z$ if there exists a neighbourhood $V_0$ of $0$ such that $V(z)\subseteq M(z)+Y_+$, for every $z \in V_0\subset Z$. 
\begin{theorem} \label{thsensi}
Let $y_0\in V(0)$ be a minimal point of $(P(0))$ and $\mathcal{L}_{y_0}$ be the Lagrange process of $(P(0))$ at $y_0$. Assume the following: there exists $x_1 \in \Omega$ for which $G(x_1) \cap \left(  - \text{int}(Z_+) \right)\neq \emptyset$,  the set $Y_+ \cap S_Y $ is compact, and $V$ is $Y_+$-dominated by $M$ near $0$. If any of the following conditions holds true: (a) Graph$(\mathcal{L}_{y_0})$ has a bounded base,  (b) $Y_+ \setminus \{ 0 \}$ is open, (c) $y_0 \in \mbox{GHe}(V(0))$,  or (d) $y_0 \in \mbox{He}(V(0))$, then we have 
\begin{equation}\label{rdo_sens} 
\text{Min} \,  DM(0,y_0)(z) = \text{Min} \, \mathcal{L}_{y_0}(-z), \, \forall z \in Z.
\end{equation} 
\end{theorem}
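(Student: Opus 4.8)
The plan is to reduce the statement to results already in hand---Theorem~\ref{TeorConting} and Lemma~\ref{sens1}---by using the domination hypothesis to identify the marginal map $M$ with the value map $V$ at the level of contingent derivatives. First I would observe that each of the conditions (a)--(d) guarantees, via Remark~\ref{CondicionesAdicionalesLM}, that $\mathcal{L}_{y_0}$ is a Lagrange multiplier of $(P(0))$ at $y_0$; this is precisely the hypothesis needed to invoke Lemma~\ref{sens1} and the regularity facts established in its proof. I would also record that, since $y_0$ is a minimal point, $y_0\in M(0)\subseteq (M+Y_+)(0)$ by Definition~\ref{InfumablePoint}, so that $(0,y_0)\in\mbox{Graph}(M)$ and the contingent derivative $DM(0,y_0)$ is well defined.

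The crucial step is to compare $M$ and $V$ through the cone $Y_+$. From $M(z)=\mbox{Min}(V(z))\subseteq V(z)$ one gets $(M+Y_+)(z)\subseteq (V+Y_+)(z)$, while the domination hypothesis $V(z)\subseteq M(z)+Y_+$ near $0$ yields the reverse inclusion $(V+Y_+)(z)\subseteq (M+Y_+)(z)$. Hence $(M+Y_+)(z)=(V+Y_+)(z)$ for every $z$ in a neighbourhood of $0$. Because the contingent cone depends only on the local structure of a set near the reference point, this equality of graphs in a neighbourhood of $(0,y_0)$ forces $D(M+Y_+)(0,y_0)=D(V+Y_+)(0,y_0)$. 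Combining this with Theorem~\ref{TeorConting}, which identifies $D(V+Y_+)(0,y_0)(z)$ with $\mathcal{L}_{y_0}(-z)$, I obtain $D(M+Y_+)(0,y_0)(z)=\mathcal{L}_{y_0}(-z)$ for every $z\in Z$, and in particular the equality of their $\mbox{Min}$ sets.

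It then remains to pass from $M+Y_+$ to $M$ on the left-hand side, that is, to show $\text{Min}\,DM(0,y_0)(z)=\text{Min}\,D(M+Y_+)(0,y_0)(z)$. I would do this by applying \cite[Theorem~13.1.1]{Zalinescu2015} to the pair $M$ and $M+Y_+$, exactly as Lemma~\ref{sens1} applies it to $V$ and $V+Y_+$; here the compactness of $Y_+\cap S_Y$ is what makes the theorem applicable. The transversality condition that must be checked is $D_S M(0,y_0)(0)\cap(-Y_+)=\{0\}$, and this comes for free: the inclusion $\mbox{Graph}(M)\subseteq\mbox{Graph}(V)$ forces $D_S M(0,y_0)(0)\subseteq D_S V(0,y_0)(0)$, while the equality $D_S V(0,y_0)(0)\cap(-Y_+)=\{0\}$ was already established in the proof of Lemma~\ref{sens1} (as a consequence of $\mathcal{L}_{y_0}$ being a Lagrange multiplier). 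Chaining the identities then gives $\text{Min}\,DM(0,y_0)(z)=\text{Min}\,D(M+Y_+)(0,y_0)(z)=\text{Min}\,\mathcal{L}_{y_0}(-z)$, which is \eqref{rdo_sens}.

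The main obstacle is the comparison between the marginal map $M$ and the value map $V$: the passage $\text{Min}\,DM=\text{Min}\,D(M+Y_+)$ is exactly where the domination property becomes indispensable, and the delicate point is confirming that the regularity condition required by Zalinescu's theorem is inherited by $M$ from $V$ through the graph inclusion $\mbox{Graph}(M)\subseteq\mbox{Graph}(V)$. Everything else is bookkeeping that recombines Theorem~\ref{TeorConting}, Lemma~\ref{sens1}, and Remark~\ref{CondicionesAdicionalesLM}.
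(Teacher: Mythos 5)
Your proposal is correct, and it reaches \eqref{rdo_sens} by a genuinely different decomposition than the paper. The paper's proof is a three-link chain read in the opposite direction: conditions (a)--(d) make $\mathcal{L}_{y_0}$ a Lagrange multiplier (via \cite[Theorem 1.1]{GC-Melguizo2019}); Theorem~\ref{TeorConting} gives $\text{Min}\,\mathcal{L}_{y_0}(-z)=\text{Min}\,D(V+Y_+)(0,y_0)(z)$; Lemma~\ref{sens1} (as stated) gives $=\text{Min}\,DV(0,y_0)(z)$; and the passage from $V$ to the marginal map $M$ is delegated entirely to \cite[Theorem 13.1.4~(ii)]{Zalinescu2015}, which is where the domination hypothesis is consumed as a black box. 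You instead never compare $DV$ with $DM$: you use domination directly and elementarily, observing that $V(z)\subseteq M(z)+Y_+$ near $0$ together with $M(z)\subseteq V(z)$ forces the literal local identity $(M+Y_+)(z)=(V+Y_+)(z)$ (the empty-value cases being handled by the paper's convention $A+\emptyset=\emptyset$), so the contingent cones of the two graphs at $(0,y_0)$ coincide and $D(M+Y_+)(0,y_0)(z)=\mathcal{L}_{y_0}(-z)$ exactly --- a slightly stronger intermediate identity than the paper records. You then apply \cite[Theorem~13.1.1]{Zalinescu2015} once, to the pair $(M,M+Y_+)$ rather than $(V,V+Y_+)$, checking the required transversality condition $D_SM(0,y_0)(0)\cap(-Y_+)=\{0\}$ by monotonicity of the S-derivative under $\mbox{Graph}(M)\subseteq\mbox{Graph}(V)$ and the equality $D_SV(0,y_0)(0)\cap(-Y_+)=\{0\}$ established inside the proof of Lemma~\ref{sens1}. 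What each route buys: yours makes the role of domination fully transparent (a local equality of graphs rather than an appeal to a sensitivity theorem) and dispenses with Theorem 13.1.4~(ii) altogether, at the price of (i) leaning on the internals of Lemma~\ref{sens1}'s proof rather than its statement, and (ii) needing that Theorem 13.1.1 applies to the typically irregular, non-convex-graph map $M$ --- which it does, since per the paper's own usage its only hypotheses are compactness of $Y_+\cap S_Y$ and the S-derivative condition; the paper's route is shorter and keeps all $V$-versus-$M$ analysis inside the cited reference. Your sourcing of the Lagrange-multiplier step through Remark~\ref{CondicionesAdicionalesLM} is materially the same fact the paper cites from \cite[Theorem 1.1]{GC-Melguizo2019}, so no gap there.
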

\begin{proof}
Assume that at least one of conditions (a)-(d) above holds true.  By \cite[Theorem 1.1]{GC-Melguizo2019}, $\mathcal{L}_{y_0}$ is a Lagrange multiplier of $(P(0))$ at $y_0$. Now,
let us pick an arbitrary $z \in Z$. From Theorem \ref{TeorConting}, we have the equality $\text{Min} \, \mathcal{L}_{y_0}(-z)= \text{Min} \, D (V+Y_+)(0,y_0)(z)$.  By Lemma  \ref{sens1}, the last set equals $\text{Min} \,DV(0,y_0)(z)$. Now \cite[Theorem  13.1.4  $(ii)$]{Zalinescu2015} gives $\text{Min} \,  DV(0,y_0)(z)   = \text{Min} \,  DM(0,y_0)(z)$, yielding the equality $\text{Min} \, \mathcal{L}_{y_0}(-z)= \text{Min} \,  DM(0,y_0)(z)$.
\end{proof}

\begin{remark}\label{Remark_sensibilidad} 
The sensitivity of the program $(P(0))$ is closely related to the solutions of the dual program $(D(0))$. Indeed, equality (\ref{rdo_sens}) expresses the first-order sensitivity of $(P(0))$ at $y_0$ in terms of the Lagrange process $\mathcal{L}_{y_0}$. Besides, (\ref{Formula_defi_Gamma_y0}), (\ref{Ec_defi_LagrangeProcess}), and the equality $\mbox{Graph}(-S_{(z^*,y^*)})= \mbox{cl} \left( \cup_{ \Delta \in \Lambda_{(z^*,y^*)}}{\mbox{Graph}(\Delta)} \right)$ show the strong dependence of the Lagrange process $\mathcal{L}_{y_0}$ on the set  $\Gamma_{y_0}$ of solutions of the dual problem $(D(0))$.% (as it is stated in Theorem \ref{Tdualidad}).
%Equalities  (\ref{rdo_sens}), (\ref{Ec_defi_LagrangeProcess}), and  $Graph(-S_{(z^*,y^*)})= cl \left( \cup_{ \Delta \in \Lambda_{(z^*,y^*)}}{Graph(\Delta)} \right)$ reveal the close relationship between the sensitivity of the problem $(P(0))$ at any of its minimal points  and the set of their corresponding associated dual solutions. 
\end{remark}

Next, we see how our approach contains the conventional scalar case.% as a particular case. %This remarkable fact is noted below.
\begin{remark} \label{remarkscalar} 
Let us take  $Y=\mathbb{R}$ and $Y_+=\mathbb{R}_+$; and assume that $F$ and $G$ in $(P(z))$ are single-valued maps. Then  $(P(0))$ becomes a conventional  scalar convex program and  $M$ becomes the marginal function. If we suppose  $M$ Fr\'echet differentiable, then $ M'(0;z)=DM(0,y_0)(z) $; where $M'(0;\cdot)$ stands for the Fr\'echet differential  of $M$ at $0$. On the other hand,  $\mathcal{L}_{y_0}(z)=\ell_0(z)+\mathbb{R}_+$ for every $z \in Z $; where  $\ell_0$ stands for the ``classical" scalar Lagrange multiplier of $(P(0))$.   Theorem \ref{thsensi} yields the following chain of equalities $M'(0,z)= \text{Min} M'(0,z)= \text{Min} DM(0,y_0)(z)=\text{Min}  \mathcal{L}_{y_0}(-z)=\text{Min} (\ell_0(-z)+\mathbb{R}_+)=\ell_0(-z) =-\ell_0(z)$, for every $z \in Z$. Those recover a well-known relationship between the sensitivity of a program and its Lagrange multiplier (see for instance \cite[Section 8.5, p. 221]{Luenberger1969}).
\end{remark}

The property of boundedness of the unit ball  is essential in the proof of Lemma \ref{Lemma_cono}. However,  the subsequent proposition states the following.  For any weak neighbourhood of the origin $W$,  the set ${\mbox{cl}(\mbox{cone}}(x_0+W))\cap \mbox{Ker}T$ contains a subspace of finite codimension; in  contraposition to the inclusion $\mbox{cl}(\mbox{cone}(x_0+\delta B))\setminus\{0\}\subset \{x \in X\colon T(x)>0\}$, which we find in the statement of Lemma~\ref{Lemma_cono}. This fact does not allow us to extend our argument in the proof of Lemma~\ref{Lemma_cono} to locally convex spaces. 

\begin{proposition}\label{Prop_no_elc}
Let $X$ be a normed space, $T \in X^*$, and $x_0 \in X$ such that $T(x_0)>0$. Fix $n \geq 1$, $\epsilon>0$, $T_i \in X^*$, $\forall i \in \{1,\ldots,n\}$, and consider $W=\cap_{i=1}^n\{x \in X \colon T_i(x)\leq \epsilon\}$ such that $\{x \in X \colon T(x) \leq 0\}\cap (x_0+W)=\emptyset$. 
Then $\mbox{cl}(\mbox{cone} (x_0+W))= \cap_{i=1}^n\{x \in X \colon T_i(x)\leq 0\}\cup \mbox{cone} (x_0+W)$. As a consequence, the set,  $\mbox{cl}(\mbox{cone}(x_0+W))\cap \mbox{Ker}T$, contains a subspace of finite codimension. 
%\textcolor{red}{ \subseteq^?} 
%\textcolor{purple}{(Arriba, donde pongo la inclusion en rojo con una interrorgacion, esa igualdad yo diria que es una inclusion, no?, pienso en el caso de dimension finita en donde las dos topologias coinciden, ahi no se da la igualdad, ya que en la parte derecha tienes el cono unido a una bola en el origen, que no es un cono)}
\end{proposition}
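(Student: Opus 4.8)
The plan is to establish the set equality first, since the finite-codimension consequence follows from it by a short linear-algebra argument. The inclusion $\supseteq$ should be essentially free: $\operatorname{cone}(x_0+W)$ is contained in $\operatorname{cl}(\operatorname{cone}(x_0+W))$ by definition, and the cone $\bigcap_{i=1}^n\{x : T_i(x)\le 0\}$ is closed and consists of directions reachable as limits of $\lambda_n(x_0+w_n)$ with $\lambda_n\to 0^+$, so it sits inside the closure as well. The real content is the inclusion $\subseteq$, which is where I would concentrate the work.

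For $\subseteq$, I would take an arbitrary $x=\lim_k \lambda_k(x_0+w_k)$ with $\lambda_k>0$ and $w_k\in W$, and split into cases according to the behaviour of $(\lambda_k)_k$, mimicking the bounded-ball argument of Lemma~\ref{Lemma_cono} but without boundedness of $W$. The hypothesis $\{x:T(x)\le 0\}\cap(x_0+W)=\emptyset$ forces $T(x_0+w)>0$ for all $w\in W$, i.e.\ $T(w)>-T(x_0)$ on $W$; this is the replacement for the role that $0\notin x_0+\epsilon B$ played before. If $(\lambda_k)_k$ has a bounded subsequence, pass to a convergent one $\lambda_k\to\lambda\ge 0$: when $\lambda>0$ one recovers $w_k\to x/\lambda - x_0$, and since $W$ is closed (as a finite intersection of closed half-spaces) the limit lies in $W$, giving $x\in\operatorname{cone}(x_0+W)$; when $\lambda=0$ one shows $x\in\bigcap_i\{T_i\le 0\}$ by applying each continuous functional $T_i$ to $\lambda_k(x_0+w_k)$ and using $T_i(x_0+w_k)\le T_i(x_0)+\epsilon$ together with $\lambda_k\ge 0$. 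The genuinely new case is when $(\lambda_k)_k$ is unbounded: here I would normalize and argue that the limiting direction must lie in $\bigcap_i\{T_i\le 0\}$, again by evaluating the $T_i$ and exploiting that $T(x_0+w_k)>0$ keeps the relevant quantities controlled.

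The main obstacle will be exactly this unbounded case, since dropping boundedness of $W$ is the whole point of contrasting with Lemma~\ref{Lemma_cono}: one can no longer conclude $\lambda_k(x_0+w_k)\to 0$ from $\lambda_k\to 0$, and conversely large $\lambda_k$ no longer force a contradiction. I expect the correct bookkeeping to rely on the fact that the $T_i$ are bounded \emph{above} by $T_i(x_0)+\epsilon$ on $x_0+W$ while $T$ is bounded \emph{below} away from $0$ there; combining these should pin any escaping direction into the recession cone $\bigcap_i\{T_i\le 0\}$. Care is needed because the $T_i$ need not be bounded below on $W$, so the argument must only use the one-sided bounds that the half-space description of $W$ actually provides.

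For the finite-codimension consequence, once the set equality is in hand I would intersect both sides with $\operatorname{Ker}T$. Since $T(x_0+w)>0$ on $x_0+W$, the piece $\operatorname{cone}(x_0+W)$ meets $\operatorname{Ker}T$ only possibly at $0$, so $\operatorname{cl}(\operatorname{cone}(x_0+W))\cap\operatorname{Ker}T$ reduces to $\bigl(\bigcap_{i=1}^n\{T_i\le 0\}\bigr)\cap\operatorname{Ker}T$ up to the origin. The subspace $\bigcap_{i=1}^n\operatorname{Ker}T_i\cap\operatorname{Ker}T$ is contained in this set and has codimension at most $n+1$, being a finite intersection of kernels of continuous functionals; exhibiting it furnishes the claimed finite-codimensional subspace and completes the proof.
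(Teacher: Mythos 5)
Your proposal follows essentially the same route as the paper's proof: represent a closure point as a limit of $\lambda_k(x_0+w_k)$, split on the behaviour of $(\lambda_k)_k$, use the one-sided bounds $T_i\le\epsilon$ on $W$ to push the limit into $\cap_{i=1}^n\{x\colon T_i(x)\le 0\}$ when $\lambda_k\to 0$, obtain the reverse inclusion from $y=\lim_m (1/m)(x_0+my)$ with $my\in W$ (valid since $T_i(my)\le 0\le\epsilon$), and exhibit the subspace $\cap_{i=1}^n\mbox{Ker}\,T_i\cap\mbox{Ker}\,T$, of codimension at most $n+1$, for the final claim. Your handling of the $\lambda_k\to 0$ case is even a mild streamlining: the direct estimate $T_i(x)=\lim_k\lambda_k T_i(x_0+w_k)\le\lim_k\lambda_k(T_i(x_0)+\epsilon)=0$ avoids the paper's detour, which first deduces $\|w_\alpha\|\to\infty$ and then normalizes to get $T_i(y/\|y\|)=\lim_\alpha T_i(w_\alpha/\|w_\alpha\|)\le\lim_\alpha\epsilon/\|w_\alpha\|=0$.

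The one genuine flaw is your diagnosis of the unbounded case. You assert that ``large $\lambda_k$ no longer force a contradiction'' and plan to normalize and control things via $T(x_0+w_k)>0$; as stated this would not go through, since for $\lambda_k\to\infty$ the bound $\lambda_k T_i(x_0+w_k)\le\lambda_k(T_i(x_0)+\epsilon)$ blows up whenever $T_i(x_0)+\epsilon>0$, while $T_i(x_0+w_k)\to 0$ leaves the product $\lambda_k T_i(x_0+w_k)$ of indeterminate sign, so neither route pins the direction into $\cap_i\{T_i\le 0\}$. But the case is in fact vacuous, by exactly the mechanism of Lemma~\ref{Lemma_cono} you believed was lost: the hypothesis $\{x\colon T(x)\le 0\}\cap(x_0+W)=\emptyset$ gives $0\notin x_0+W$, and $x_0+W$ is closed (a finite intersection of translated closed half-spaces), so $d:=\mbox{dist}(0,x_0+W)>0$; if $\lambda_k\to\infty$, then $\|\lambda_k(x_0+w_k)\|\ge\lambda_k d\to\infty$, contradicting convergence of $\lambda_k(x_0+w_k)$. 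This is precisely how the paper disposes of it (it invokes the argument of Lemma~\ref{Lemma_cono} to conclude $\lambda_\alpha\to 0$ outright). What unboundedness of $W$ actually destroys is only the conclusion $\lambda_k(x_0+w_k)\to 0$ when $\lambda_k\to 0$ --- which is exactly where the recession cone $\cap_i\{T_i\le 0\}$ enters, and which you handle correctly. With the unbounded case closed off in this way, your argument is complete and coincides with the paper's.
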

\begin{proof}
We choose $y \in \mbox{cl}(\mbox{cone} (x_0+W))$, $y \not =0$, and $y \not \in \mbox{cone} (x_0+W)$.We will check that $y \in \cap_{i=1}^n\{x \in X \colon T_i(x)\leq 0\}$. Then, $y =\lim_{\alpha} \lambda_{\alpha}(x_0+w_{\alpha})$ for some nets $(\lambda_{\alpha})_{\alpha}\subset \mathbb{R}_{++}$ and $(w_{\alpha})_{\alpha}\subset W$. As in the proof of Lemma \ref{Lemma_cono}, the conditions $0 \not \in x_0+W$ and $y \not \in \mbox{cone} (x_0+W)$ assure that  $\lim_{\alpha} \lambda_{\alpha}=0$. Then $y =\lim_{\alpha} \lambda_{\alpha}w_{\alpha}$. Thus $\parallel y \parallel  =\lim_{\alpha} \lambda_{\alpha}\parallel w_{\alpha} \parallel$, which yields $\lim_{\alpha} \parallel w_{\alpha} \parallel = + \infty$. Therefore, it is not restrictive to assume that $\parallel w_{\alpha} \parallel>0$ for every $\alpha$. We have the equality $y/\parallel y \parallel=\lim_{\alpha}  w_{\alpha} /\parallel w_{\alpha} \parallel$. Now, fix any $i \in \{1,\ldots,n\}$. Then $T_i(y/\parallel y \parallel)=\lim_{\alpha}T_i(w_{\alpha} /\parallel w_{\alpha} \parallel)\leq \lim_{\alpha}\epsilon/\parallel w_{\alpha} \parallel=0$. As $i$ was arbitrarily taken, we have $y \in \cap_{i=1}^n\{x \in X \colon T_i(x)\leq 0\}$. For the reverse inclusion we consider some $y \in \cap_{i=1}^n\{x \in X \colon T_i(x)\leq 0\}$ and fix $i \in \{1,\ldots,n\}$. For any $m \geq 1$, we have $T_i(x_0+my) \leq T_i(x_0)<T_i(x_0)+\epsilon$. Then $x_0+my \in x_0+W$, for every $m \geq 1$. Now,  the equality $y =\lim_m (1/m)(x_0+my)$ yields $y \in \mbox{cl}(\mbox{cone} (x_0+W))$. Finally,  since the space $\cap_{i=1}^n \mbox{Ker} T_i \cap  \mbox{Ker}T$ has finite codimension and it is contained in $\mbox{cl}(\mbox{cone}(x_0+W))\cap \mbox{Ker}T$, the proof is over.
\end{proof}
%The former resut leads us to the following problem for future research.

%\textcolor{purple}{(a este comentario tal como está le veo el problema de que quien lea el artículo (si lo lee alguien alguna vez!) con los resultados que hay en el artículo no va a ver la conexión existente entre la condición  ``${cl\mbox{cone}}(x_0+W)\cap Ker \phantom{.} T$, contains a subspace of finite codimension" y la extensión del Lemma \ref{Lemma_cono} a LC+H, porque para eso es  fundamental tener en mente el resultado nada trivial tan chulo que probaste de que los puntos de la clausura del cono generado que no pertezcan al cono generado están en el conjunto equilibrado, simétrico, convexo que se utilize para generar el cono. Sin este resultado no se entiende muy bien el por qué de la condición. Yo veo dos opciones, o directamente se dice que hay que extener el  Lemma \ref{Lemma_cono} a LC+H sin más, o explicarlo un poco mejor. Esta es la impresión, lo mismo equivocada, que me ha dado al leerlo la primera vez. Por lo demás acabar el artículo con la pregunta abierta en el problema de a continuación está chulo.)}

%\begin{problem}\label{Problema}
%Can the main results of this paper be  extended to the framework of locally convex spaces?
%\end{problem}

\section{Conclusions} \label{Conclusions}

In this work, we provide a new set-valued extension of the classical Lagrange multiplier theorem for a constrained convex set-valued optimization problem.  In previous approaches, the Lagrange multipliers were usually linear continuous operators, but in this manuscript, the Lagrange multipliers are pointed closed convex processes. We set a dual program whose dual variables are also pointed closed convex processes. The property of being pointed is essential for the main results in the paper.  We prove that the Lagrange multipliers are solutions of the dual program.  We check that the sensitivity of the problem is closely related to the set of solutions of the dual program.  The arguments followed in this work are based on geometric principles and similar to those used in the scalar case.

We present some issues for further research. Since each minimal point of the primal program has associated many Lagrangian multipliers, it is of interest to determine which may be the most appropriate.  So, we can pose a first question: how does the graph of a Lagrangian multiplier $\Delta$ influence the program ($P[\Delta]$)  and its corresponding solutions? On the other hand,  pointed closed convex processes in set-valued analysis can be interpreted as the natural analogues to sublinear functions in the scalar case.  That leads us to the second question: is it possible to extend the approach developed in this work to non-convex settings? Finally, Proposition \ref{Prop_no_elc} shows that our arguments do not allow us to generalize Lemma \ref{Lemma_cono}  to locally convex spaces. That motivates our last question: can Lemma \ref{Lemma_cono}  be avoided so that we can extend the results of Sections 3 and 4 to locally convex spaces? 
\section*{Acknowledgement}
We thank the referees for their very valuable suggestions which have helped us a lot to improve the manuscript.

\section*{Funding}
The authors have been supported by project MTM2017-86182-P (AEI, Spain and ERDF/FEDER, EU).

\end{document}